\newcommand{\incl}[1][r]{\ar@<-0.2pc>@{^(-}[#1] \ar@<+0.2pc>@{-}[#1]}
\def\R{{\mathbb R}}
\def\C{{\mathbb C}}
\def\Z{{\mathbb Z}}
\def\S{{\mathbb S}}
\def\A{{\mathbb A}}
\def\g{{\mathfrak g}}
\def\p{{\mathfrak p}}
\def\l{{\mathfrak l}}
\def\k{{\mathfrak k}}
\def\u{{\mathfrak u}}
\def\q{{\mathfrak q}}
\def\t{{\mathfrak t}}
\def\vhi{{\varphi}}
\def\bk{{\backslash}}
\def\SL{\text{SL}(2,\C)}
\author{Mathieu Cossutta}
\address{ D\'epartement de Math\'ematiques et Applications (DMA)\\ CNRS : UMR8553\\Ecole Normale Sup\'erieure de Paris}
\email{mathieu.cossutta@ens.fr}
\title{Theta lifting for some cohomologicaly induced representations}
\begin{document}
\renewcommand{\labelitemi}{$\bullet$}
\frontmatter
\begin{abstract}
In his paper \cite{Li1}, Jian-Shu Li proved that a certain kind of
cohomological representations of $U(a,b)$ is automorphic. In this paper,
this result is generalized to a more general class of cohomological representations of
this group. It comes from the fact that these cohomological
representations are the image by the theta correspondance of some
representations obtained by cohomological induction. In proving this theorem,
we also prove some cases of Adams conjecture \cite{Adams}.
\end{abstract}
\subjclass{11F27,11F37,11F70,11F75}

\maketitle

\section{Introduction}\label{sect0}
Let $a,b\in\Z_{\geq 0}$ and $n=a+b$. Let $G=U(a,b)$ viewed as the group of matrices:
$$\left\{M\in M(n,\C)|\ \overline{^tM}I_{a,b}M=I_{a,b}\right\}$$
where:
$$I_{a,b}=
\left(
\begin{array}{cc}
I_a&0\\
0&-I_b
\end{array}
\right).
$$
We have a diagonal embedding of $K=U(a)\times U(b)$ in $G$. $K$ is a
maximal subgroup of $G$ and the quotient
$$X_{a,b}=G/K,$$
is the symmetric space associated to $G$. In the paper \cite{Li1}, Jian-Shu Li proves the
most general known results about the construction of non trivial cohomology
classes for manifolds
$$S(\Gamma)=\Gamma\bk X_{a,b}$$
where $\Gamma$ is a discrete cocompact subgroup of $G$ acting freely on $X_{a,b}$. For example, he
proves the following theorem.
\begin{theo}\cite{Li1}\label{theo0.1}
Let $m,s,k,l,t\in\Z_{\geq 0}$ such that
$$k+l\leq a,\ s+m\leq b, m+s+k+l< 2(a+b)\text{ and }t\leq mk+sl$$
then there exists a discrete cocompact subgroup $\Gamma$ of $\text{U}(a,b)$
such that
$$H^{R^+,R^-}(S(\Gamma),\C)\neq 0,$$
where $R^+=k(q-s)+ps-t$ and $R^-=t+(q-s)l+(p-k-l)m$.
\end{theo}
We give a generalization of this theorem (see theorem \ref{theo4.5}). Before
stating our result, we recall the proof of this theorem. Let $(\q=\u\oplus \l,L)$ be a parabolic theta
stable algebra (see definition \ref{defi2.1}) and $\lambda$ be the differential of a unitary character
of $L$. Let 
$$R(\q)=\dim\u\cap\p.$$
According to Knapp and Vogan \cite{KV}, one can then define by cohomological
induction a representation $A_\q(\lambda)$. Under the
condition that
\begin{eqnarray}\label{eq0.1}
(\lambda,\tau)\geq 0\ \forall \tau\in\u,
\end{eqnarray}
 the representation $A_\q(\lambda)$ is non zero unitary and
 irreducible. Let  $\Gamma$ be a cocompact discrete subgroup of $G$
acting freely on $X$, then by Matsushima's formula
\begin{eqnarray}\label{eq0.2}
H^*(\Gamma\bk X)=\oplus_{\q}
m(A_\q(0),\Gamma)\hom_K(\bigwedge^*\mathfrak{p},A_\q(0)).
\end{eqnarray}
A more precise version of theorem \ref{theo0.1} is:
\begin{theo}\cite{Li1}\label{theo0.2}
Let $(\q,L)$ be a parabolic theta stable algebra of $U(a,b)$ such that
$$L=U(x,y)\times \text{a compact group},$$
and suppose that $2(x+y)>a+b$ then there exists a discrete cocompact
subgroup of $U(a,b)$ such that:
$$m(A_\q,\Gamma)\neq 0.$$
\end{theo}
The proof is based on the use of local and
global theta correspondance. In fact Jian-Shu Li proved the following 
archimedean result on theta correspondance.

\begin{theo}\cite{Li3}\label{theo0.3} Let $(\q,L)$ be a parabolic theta stable algebra of $U(a,b)$
  such that 
$$L=U(x,y)\times \text{a compact group},$$
and $\lambda$ be the differential of a unitary character of $L$ verifying
(\ref{eq0.1}). Suppose that $(x+y)\geq a+b$ then there exists $a',b'$ with
$a'+b'=a+b-(x+y)$ and a discrete series $\pi_d$ of $U(a',b')$ such that:
$$\theta(\pi_d)=A_\q(\lambda).$$
\end{theo}
It is now classical to deduce theorem \ref{theo0.2} from theorem \ref{theo0.3} by using global theta
correspondance and  De George and Wallach result \cite{DW1} on
multiplicities of discrete series. So to
generalize theorem \ref{theo0.2}, we have to prove that more general
representations of type $A_\q(\lambda)$ are in the image of the theta
correspondance. In that purpose, one has to find a candidate for the equation
\begin{eqnarray}\label{eq0.3}
\theta(\pi)=A_\q(\lambda).
\end{eqnarray}
To solve this equation are introduced Arthur parameter and Arthur
packet for cohomologically induced representation and the Adams
conjectures are stated. This is done in sections \ref{sect1},\ref{sect2} and \ref{sect3}. This allows to describe in a
natural way a candidate $\pi$ for equation (\ref{eq0.3}). Our main theorem is then
the following:

\begin{theo}\label{theo0.4}
Let $(\q,L)$ be a standard parabolic theta stable algebra  with 
$$L=\prod_{i=1}^rU(a_i,b_i)$$
and $\lambda$ be the differential of a unitary character of $L$ verifying
(\ref{eq0.1}). Let $r_0\in\{1,\dots,r\}$ such that
$$a_{r_0}+b_{r_0}=\max_i(a_i+b_i).$$
Let:
$$
\left\{
\begin{array}{ll}
a'=\sum_{i<r_0}a_i+\sum_{i>r_0}b_i\text{ and }\\
b'=\sum_{i>r_0}a_i+\sum_{i<r_0}b_i
\end{array}
\right..
$$
We suppose that 
$$
\sum_{i\neq r_0}a_i+b_i\leq \min(a,b),
$$
then $A_\q(\lambda)$ is a theta lift from $U(a',b')$ of a representation 
$A_{\q'}(\lambda')$ cohomologically induced from a parabolic theta stable algebra $(\q',L')$ verifying:
$$L'=\prod_{i=1}^{r_0-1}U(a_i,b_i)\times\prod_{r_0+1}^{r}U(b_i,a_i).$$
\end{theo}

Since a representation $A_{\q}(\lambda)$ is a discrete serie if and only
if $L$ is compact, this theorem implies that some cohomological
representations are obtained
from a discrete series by a certain number of theta lifts. It can then be globalized to
prove that some new $A_\q(\lambda)$ verify $m(A_\q,\Gamma)\neq 0$.  We
introduce the notion of a convergent parabolic theta stable algebra
(definition \ref{defi4.2}).
\begin{theo} Let $\q$ be a convergent parabolic theta stable algebra of
  $G$. There exists a discrete cocompact subgroup $\Gamma$ acting freely on
  $X_{a,b}$ such that:
$$m(A_{\q}(0),\Gamma)\neq 0.$$
\end{theo}
The article goes as follows. Some facts about Arthur parameter are recalled
in the first part and some about $A_\q(\lambda)$ in the second one. The
proof of our main archimedean results (theorem
\ref{theo3.3}) is developped in the third part,
using the facts recalled in the two previous one. In the last part, some
global applications are given.

\section{Arthur packets}\label{sect1}
Arthur parameters are representations of Weil groups in
$L$-groups. Some useful definitions are recalled in the case of unitary
groups.  
\begin{defi}\label{defi1.1}
The Weil group of $\C$ (resp. $\R$) is defined by:
\begin{itemize}
\item $W_\C=\C^*$,
\item $W_\R=\C^*\cup j\C^*,$ with $j^2=-1$ and $jcj^{-1}=\overline{c}$.
\end{itemize}
\end{defi}
There is a canonical inclusion of $W_\C$ in $ W_\R$ sitting in an exact sequence:
\begin{equation}\label{seq}
1\rightarrow W_\C\rightarrow W_\R\rightarrow \{\pm 1\}\rightarrow 1.
\end{equation}

\begin{defi}\label{defi1.2}

The $L$-group of $U(a,b)$ is the semi-direct product:
$$\text{GL}(n,\C)\rtimes W_\R,$$
with $z\in\C^\times$ acting trivially on $\text{GL}(n,\C)$ and $j$ acting by:
$$x\rightarrow \Phi_n\ ^tx^{-1}\Phi_n^{-1},$$
where $\Phi_n$ is the matrix defined by
$$(\Phi_n)_{i,j}=\delta_{j,n+1-j}(-1)^{n-i}.$$
\end{defi}
\begin{defi}\label{defi1.3}
An Arthur parameter is a homomorphism:
$$\psi: W_\R\times\text{SL}(2,\C)\rightarrow\ ^LG,$$
satisfying:
\begin{enumerate}
\item the diagram
$$\xymatrix{ W_\R\times\text{SL}(2,\C) \ar[rr]^{\psi} \ar[rd] &&\ ^LG \ar[ld] \\ & W_\R}$$
is commutative,
\item the restriction of $\psi$ to ${\text{SL}(2,\C)}$ is algebraic and
\item the image of the restriction of $\psi$ to ${W_\R}$ is quasi-compact.
\end{enumerate}
\end{defi}
Let $\psi$ be an Arthur parameter. In this paper, we are interested by the restriction of $\psi$ to:
$$W_\C\times \text{SL}(2,\C).$$
Let $\mu$ be the character of $\C^*$ given by:
$$\mu(z)=\frac{z}{\overline{z}}$$
and $\sigma_k$ be the unique representation of dimension $k$ of
$\text{SL}(2,\C)$. Cases where
$$\psi_{|W_\C\times \text{SL}(2,\C)}=\oplus_{j}\mu^{k_j}\otimes\sigma_{n_j}$$
with $k_j$ half integral, will be studied. 

One hopes to associate to every Arthur parameter,
a packet $\Pi(\psi)$ of irreducible unitarisable $(\g,K)$-modules with
properties imposed by the trace formula (see \cite{Arthur}). These sets should not form a partition of the unitary dual of $G$,
but their elements should appear in space $L^2(\Gamma\bk G)$ for some
discrete arithmetic $\Gamma$.  The most simple example is the following.
\begin{exem}\label{exem1.1}$\ $
If:
$$\psi_{|\C^\times\times\text{SL}(2,\C)}=\mu(z)^{k}\otimes\sigma_{n},$$
then:
\begin{eqnarray}
\Pi(\psi)=\left\{\left(\det\bullet\right)^k\right\}.
\end{eqnarray}
\end{exem}

Bergeron and Clozel have conjectured in \cite{BC} that the elements of
$\Pi(\psi)$ have the same infinitesimal character and that we can determine
it simply from $\psi$. We state the precise conjecture.   
\begin{rema}
The subscript $0$ is used for real Lie
algebras and no subscript for complex one. The Lie algebra $\mathfrak{g}_0$ of
$G$ is:
$$\left\{M\in M(n,\C)|\ \overline{^tM}I_{a,b}+I_{a,b}M=0\right\}.$$

The Lie algebra $\mathfrak{k}_0$ of $K$ is:
$$\mathfrak{k}_0=\left\{\left(
\begin{array}{cc}
A&0\\
0&B
\end{array}
\right)|\ \overline{^tA}+A=\overline{^tB}+B=0\right\}.$$
A compact Cartan algebra of both $\k_0$ and $\g_0$ is:
$$\mathfrak{t}_0=\left(\left(\begin{array}{cccccc
}
x_1&\\
&\ddots&&&\\
&&x_a&&&\\
&&&y_1&&\\
&&&&\ddots&\\
&&&&&y_b
\end{array}\right)|\ x_i,y_j\in\imath\R\right).$$
\end{rema}
So by the Harish-Chandra isomorphism, the infinitesimal character
of an irreducible $(\g,K)$-module can be seen 
as an element of $\C^{a+b}$ (up to permutation).

\begin{defi}\label{defi1.4} Let $\psi$ be an Arthur parameter. If:
$$\psi_{|W_\C\times \text{SL}(2,\C)}=\oplus_{j}\mu^{k_j}\otimes\sigma_{n_j},$$
the element
$$\left(k_j+\frac{n_j+1-2t}{2}\right)\ t=1,\dots,n_j$$
of $\C^{a+b}$ is called the infinitesimal character of $\psi$.
\end{defi}

\begin{conj}[Bergeron and Clozel \cite{BC}]
Let $\psi$ be an Arthur parameter then every elements of $\Pi(\psi)$ has the infinitesimal character of $\psi$.
\end{conj}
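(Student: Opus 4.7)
The plan is to verify the conjecture in the situations relevant to this paper by combining the explicit construction of Arthur packets via cohomological induction with the standard formula for the infinitesimal character of $A_\q(\lambda)$.

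First, I would restrict attention to Arthur parameters $\psi$ whose restriction to $W_\C \times \SL$ has the prescribed form $\oplus_j \mu^{k_j} \otimes \sigma_{n_j}$, with $\sum_j n_j = n$. Following the framework to be developed in sections \ref{sect2} and \ref{sect3}, the packet $\Pi(\psi)$ is built from cohomologically induced representations $A_\q(\lambda)$ where $(\q, L)$ is a standard parabolic theta stable algebra with Levi $L = \prod_j U(a_j, b_j)$, $a_j + b_j = n_j$, and $\lambda$ is the differential of the character $\otimes_j (\det)^{k_j}$ of $L$.

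Next I would apply the Knapp--Vogan formula: the infinitesimal character of $A_\q(\lambda)$ is $\lambda + \rho$, where $\rho$ is the half-sum of the positive roots of $\g$ computed with respect to an ordering making $\u$ the sum of the positive root spaces outside $\l$. Written out on the compact Cartan $\t_0$, $\lambda$ contributes the vector obtained by stacking $(k_j, k_j, \dots, k_j)$ ($n_j$ entries) over the blocks, and $\rho$ restricts to $\bigl((n_j - 1)/2, (n_j - 3)/2, \dots, -(n_j - 1)/2\bigr)$ on each block. Summing block by block yields the multiset
$$\bigcup_j \Bigl\{k_j + \frac{n_j + 1 - 2t}{2} \,:\, t = 1, \dots, n_j\Bigr\},$$
which is exactly the infinitesimal character of $\psi$ prescribed by Definition \ref{defi1.4}. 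As a sanity check, Example \ref{exem1.1} fits: for $\psi|_{W_\C \times \SL} = \mu^k \otimes \sigma_n$ one has $L = G$, $\q = \g$, $\lambda$ corresponding to the weight $(k, \dots, k)$, and $A_\q(\lambda) = (\det)^k$, whose infinitesimal character $(k + (n-1)/2, \dots, k - (n-1)/2)$ agrees with the formula above.

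The main obstacle is not the infinitesimal character computation itself, which is essentially a bookkeeping exercise once $\lambda$ and $\rho$ are lined up correctly, but the preceding conceptual step: one must know that the construction $\psi \mapsto \{A_\q(\lambda)\}$ actually exhausts the genuine Arthur packet $\Pi(\psi)$ rather than a proper subset. This is a form of Adams' conjecture and remains open in general. For the classes of parameters treated in this paper, however, one can simply take the above description as the definition of $\Pi(\psi)$, after which the desired statement reduces to the Knapp--Vogan computation sketched above.
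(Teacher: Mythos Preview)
The statement is recorded in the paper as a \emph{conjecture}; the paper does not prove it. What the paper does prove is the special case Lemma~\ref{lemm2.1}: every element of the Adams--Johnson packet $\Pi_{\lambda,\q}$ has the infinitesimal character of $\psi_{\lambda,\q}$. Your proposal is in effect an attempt at this lemma, and you correctly recognise that in this paper's context the general conjecture reduces to it, since the packets here are simply \emph{defined} by (\ref{eq2.6.1}).

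Your computation, however, contains two compensating errors. First, the character $\lambda$ attached to a parameter with $\psi_{|W_\C\times\SL}=\oplus_j\mu^{k_j}\otimes\sigma_{n_j}$ is \emph{not} $\otimes_j(\det)^{k_j}$: the $L$-embedding $\psi_\q$ of (\ref{eq2.7}) already carries a twist $\mu^{m_i/2}$ on the $i$-th block, so by (\ref{eq2.10}) one has $k_i=\lambda_i+m_i/2$, i.e.\ $\lambda_i=k_i-m_i/2$. Second, your description of $\rho$ is wrong: the half-sum of positive roots of $\g$ does \emph{not} restrict to $\bigl(\frac{n_j-1}{2},\dots,-\frac{n_j-1}{2}\bigr)$ on block $j$ --- that is $\rho^+(\l)$. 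The full $\rho_\q$ on block $j$ is shifted by exactly $m_j/2$. These two $m_j/2$'s cancel, which is why your final multiset happens to agree with Definition~\ref{defi1.4}, but the intermediate steps are incorrect as written. The paper's proof of Lemma~\ref{lemm2.1} avoids this trap by going in the opposite direction: it starts from $(\q,\lambda)$, computes $\psi_{\lambda,\q}$ via (\ref{eq2.7}) and obtains (\ref{eq2.10}), then applies Definition~\ref{defi1.4} and checks that the result is $\lambda+\rho_\q$.
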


\section{Representation $A_\q(\lambda)$}\label{sect2}
Let $\p_0$ be the orthogonal complement for the Killing form of $\k_0$ in $\g_0$. So
$$\p=\left\{\left(
\begin{array}{cc}
0&A\\
B&0
\end{array}
\right)|\ A,\ ^tB\in M_{a,b}(\C)\right\}.$$

Let $\Delta(\mathfrak{g},\mathfrak{t})$ be the set of roots of $\mathfrak{t}$ in
$\mathfrak{g}$. One has that:

\begin{equation}\label{eq2.1}
\begin{split}
\Delta(\g,\t)&=\Delta(\k,\t)\cup\Delta(\p,\t),\\
\Delta(\mathfrak{k},\mathfrak{t})&=\left\{x_i-x_j|\ i\neq j\}\cup \{y_i-y_j|\
i\neq j\right\}\text{ and}\\
\Delta(\mathfrak{p},\mathfrak{t})&=\left\{x_i-y_j|i,j\right\}.
\end{split}
\end{equation}
We let $\mathfrak{g}^\tau$ be the eigenspace associated to a root
$\tau$. 
\begin{rema}\label{rema2.1}

Since $K$ is compact, for all $H\in\imath\t_0$ and $\tau\in \Delta(\g,\t)$,
one has:
$$\tau(H)\in\R.$$
\end{rema}
\subsection{Parabolic theta stable algebra}\label{ssect2.1} Let $H\in\imath\t_0$. One defines:

\begin{eqnarray}
\q(H)&=&\oplus_{\substack{\tau\in\Delta(\mathfrak{g},\mathfrak{t})\\\tau(H)\geq
0}}\ \mathfrak{g}^\tau,\\
\l(H)&=&\oplus_{\substack{\tau\in\Delta(\mathfrak{g},\mathfrak{t})\\\tau(H)=0
}}\ \mathfrak{g}^\tau\text{ and }\\
\u(H)&=&\oplus_{\substack{\tau\in\Delta(\mathfrak{g},\mathfrak{t})\\\tau(H)>0
}}\ \mathfrak{g}^\tau.
\end{eqnarray}
Then $\q(H)$ is a parabolic subalgebra of $\g$ and $\q(H)=\l(H)\oplus \u(H)$ is a Levi
decomposition. Since $\l(H)$ is defined over $\R$, there exists a
well defined subgroup $L(H)$ of $G$ of complexified Lie algebra $\l(H)$.
\begin{defi}\label{defi2.1}
A pair $(\q(H),L(H))$ defined by an element $H\in\imath\t_0$ is called a
theta stable parabolic algebra.
\end{defi}
Let $(\q,L)$ be a parabolic theta stable algebra of $\g$. Let $\u$ be the radical unipotent of $\g$. One defines:
\begin{eqnarray}\label{eq2.2}
R(\mathfrak{q})=\dim\mathfrak{p}\cap\mathfrak{u}.
\end{eqnarray}
the cohomological degree of $\q$.
By \cite{VZ}, $\bigwedge^{R(\mathfrak{q})}\left(\mathfrak{p}\cap\mathfrak{u}\right)$ is the
highest weight vector in $\bigwedge^{R(\q)}\mathfrak{p}$. Let
$V(\mathfrak{q})$ be the irreducible $K$-submodule of
$\bigwedge^{R(\mathfrak{q})}\mathfrak{p}$ generated by this vector. These
modules play an important role in the study of the cohomology of locally symmetric spaces. We are going
to explain their classification up to isomorphism (this is done for example by Bergeron in \cite{B1}). Clearly, if two theta stable parabolic algebras are
$K$-conjugated they generate the same module. So up to $K$-conjugation, it
can be assumed that $\q$ is defined by an element: 
$$H=(x_1,\dots,x_a)\otimes( y_1,\dots,y_b)\in \R^{a}\times\R^b$$
with:
$$x_1\geq\dots\geq x_a\text{ and }y_1\geq\dots\geq y_b.$$
Such an element will be called dominant.
\begin{defi}\label{defi2.2}
 A parabolic theta stable algebra defined by a dominant element is
  called standard.
\end{defi}
We are going to associate to $H$ two partitions. Recall that a partition is
a decreasing sequence $\alpha$ of natural integers
$\alpha_1,\dots,\alpha_l\geq 0$. The Young diagram of $\alpha$, simply written $\alpha$, is obtained by adding from top to bottom rows of $\alpha_i$ squares all of
the same shape. For example the Young diagram associated to the partition $(3,2,1)$ is:
$$\yng(3,2,1).$$
Let $\alpha$ and $\beta$ be partitions such that the diagram of $\alpha$
is included in the diagram of $\beta$, written $\alpha\subset\beta$. We
will also write $\beta\bk\alpha$ for the
complementary of the diagram of $\alpha$ in the diagram of $\beta$. It is
a skew diagram. The notations $a\times b$ or $b^a$ stand for the partition
$$(\underbrace{b,\dots,b}_{a\text{ times}}).$$ 
For a diagram $\alpha$, the diagram $\widetilde{\alpha}$ is defined as the diagram counting the length of
columns. For example:
$$\widetilde{(3,2,1)}=(3,2,1).$$
For an inclusion of diagram $\alpha\subset a\times b$, the
diagram $\widehat{\alpha}$ is defined as $[(a\times b)\bk\alpha]$ rotated by an angle of
$\pi$, it is a partition. For example, the inclusion $(3,2,1)\subset 3\times 3$ gives
$\widehat{(3,2,1)}=(2,1)$.
We remark that for $\alpha\subset a\times b$:
$$\widehat{\widetilde{\alpha}}=\widetilde{\widehat{\alpha}}.$$
Let $H\in\imath\t_0^*$ be dominant. One associates to $H$ two partitions $\alpha\subset\beta\subset a\times b$ defined by:
\begin{eqnarray*}
\alpha(i)&=&|\{j|x_i> y_{b+1-j}\}|\text{ and }\\
\beta(i)&=& |\{j|x_i\geq y_{b+1-j}\}|.
\end{eqnarray*}

\begin{defi}\label{defi2.3}
If $\t$ acts on a vector space $V$, one writes $\rho(V)$ for half the sum
of weights of $\t$ on $V$. It is an element of $\t^*$ that can be seen as
an element of $\C^{a}\times\C^{b}$.   
\end{defi}

Let $\q$ be a parabolic theta stable algebra and $(\alpha,\beta)$ be the
associated partitions. By definition of $(\alpha,\beta)$, one has:
\begin{eqnarray}\label{eq2.3}
\Delta(\u\cap\p)=\{x_i-y_{b+1-j}|\ (i,j)\in\alpha\}\cup \{-(x_i-y_{b+1-j})|\ (i,j)\notin\beta\}.
\end{eqnarray}
Using this equality, it is a simple calculation to see that the highest weight of $V(\q)$ is:
\begin{eqnarray}\label{eq2.4}
2\rho(\u\cap\p)=(\alpha_i+\beta_i-b)_{1\leq i\leq
  a}\otimes(a-(\widetilde{\alpha}_{b+1-j}+\widetilde{\beta}_{b+1-j}))_{1\leq
  j\leq b}.
\end{eqnarray}
So $V(\q)\simeq V(\q')$ if $\q$ and $\q'$ have the same associated partitions. Indeed:

\begin{prop}\label{prop2.1}  The following three points give the classification of modules $V(\q)$:
\begin{itemize}
\item let $\q$ be a parabolic theta stable algebra and $\alpha\subset \beta$ be the associated partitions, then $(\beta\bk\alpha)$ is a union of squares which intersect only on vertices.

\item we have $V(\q)\simeq V(\q')$ if and only if $(\q,L)$ and $(\q',L')$ have the same associated
  partitions.
\item If $\alpha\subset\beta\subset a\times b$ is a pair of partitions verifying the
  condition of the first point there exists a parabolic theta stable algebra $\q$
  with associated partition $(\alpha,\beta)$. Such a pair will be called compatible.
\end{itemize}
\end{prop}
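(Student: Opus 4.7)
The plan is to prove the three assertions of Proposition~\ref{prop2.1} in order, exploiting the combinatorial data set up by (\ref{eq2.3}) and (\ref{eq2.4}).

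For the first point, I would start from the identity $(i,j)\in\beta\setminus\alpha\iff x_i=y_{b+1-j}$, read off (\ref{eq2.3}). Grouping the cells of $\beta\setminus\alpha$ by the common value $v=x_i=y_{b+1-j}$, dominance of $H$ forces the set of row indices $i$ with $x_i=v$ to be an interval and the set of column indices $j$ with $y_{b+1-j}=v$ to be an interval, so each group of cells fills a rectangle. If $v>v'$ are two distinct such common values, the rectangle attached to $v$ lies strictly above and strictly to the right of the one attached to $v'$ (since $x_\bullet$ is non-increasing and $y_{b+1-\bullet}$ is non-decreasing), so two distinct rectangles can share at most a single vertex.

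For the second assertion, $V(\q)$ is the irreducible $K$-module of highest weight $2\rho(\u\cap\p)$, so two such modules are isomorphic if and only if their highest weights agree. Formula (\ref{eq2.4}) expresses this weight entirely in terms of the row sums $\alpha_i+\beta_i$ and the column sums $\widetilde\alpha_j+\widetilde\beta_j$, so identical associated partitions immediately yield isomorphic $V$'s. For the converse, I would use the rectangular decomposition from the first point: on each rectangle both $\alpha_i$ and $\beta_i$ are constant on rows and $\widetilde\alpha_j$, $\widetilde\beta_j$ constant on columns, while outside the rectangles $\alpha_i=\beta_i$ and $\widetilde\alpha_j=\widetilde\beta_j$. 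Combining the row sum data with the column sum data then locates the row range and column range of every rectangle, and thereby pins down $(\alpha,\beta)$.

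For the third assertion, I would construct $H$ explicitly realising a prescribed compatible pair. Enumerate the rectangles of $\beta\setminus\alpha$ from top-right to bottom-left as $R_1,\dots,R_s$, choose $\xi_1>\xi_2>\cdots>\xi_s$ in $\R$, and set $x_i=\xi_p$ for each $i$ in the row range of $R_p$ and $y_{b+1-j}=\xi_p$ for each $j$ in the column range of $R_p$. For the remaining row indices (where one wants $\alpha_i=\beta_i$) and column indices, assign generic values in the open intervals between consecutive $\xi_p$'s, arranged in decreasing order so that the inequalities prescribed by the row lengths $\alpha_i$ are respected. The resulting $H$ is then dominant, and a direct check from the definition of $\alpha(i)$ and $\beta(i)$ confirms it produces the prescribed associated partitions.

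The crux is the combinatorial uniqueness in the second point: the symmetric sums $\alpha_i+\beta_i$ and $\widetilde\alpha_j+\widetilde\beta_j$ do not separate $\alpha$ from $\beta$ in an obvious way, and the argument genuinely needs the rectangular anti-diagonal geometry supplied by the first point.
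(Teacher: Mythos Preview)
The paper does not actually supply a proof of Proposition~\ref{prop2.1}; it is stated with a reference to Bergeron \cite{B1}, so there is no ``paper's own proof'' to compare against.  Your outline follows the standard route and is essentially correct, with one point that needs to be made precise.

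For the first point your argument is fine, and what you prove is that $\beta\setminus\alpha$ is a disjoint union of \emph{rectangles}, one for each common value of the $x_i$'s and $y_{b+1-j}$'s, placed in anti-diagonal position so that any two meet in at most a vertex.  (The word ``squares'' in the statement should be read this way; cf.\ the author's earlier use of ``squares'' for the unit boxes of a Young diagram.)

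For the second point you correctly isolate the crux, but ``combining the row sum data with the column sum data then locates every rectangle'' is an assertion, not yet an argument.  One clean way to complete it: from $s_i:=\alpha_i+\beta_i$ and the partition inequalities one gets that $s_i=s_{i+1}$ forces $(\alpha_i,\beta_i)=(\alpha_{i+1},\beta_{i+1})$, so the weight already cuts the rows into blocks of constant $(\alpha_i,\beta_i)$; it remains to separate $\alpha_i$ from $\beta_i$ on each block.  For this observe that $\beta_1$ equals the largest $j$ with $t_j:=\widetilde\alpha_j+\widetilde\beta_j>0$ (since $t_j>0\iff\widetilde\beta_j>0\iff j\le\beta_1$), whence $\alpha_1=s_1-\beta_1$.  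Stripping off the first row and replacing $t_j$ by $t_j-[j\le\alpha_1]-[j\le\beta_1]$ keeps one inside the class of compatible pairs, and induction on $a$ finishes the reconstruction.  This makes your sketch into an actual proof.

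For the third point your construction is correct.  The only care needed is that the ``generic'' values for free rows and free columns be chosen consistently; this is automatic once you note that a compatible pair already prescribes a total preorder on $\{x_1,\dots,x_a\}\cup\{y_1,\dots,y_b\}$ (equalities exactly on the rectangles, strict inequalities dictated by membership in $\alpha$ or in the complement of $\beta$), and any dominant $H$ realizing that preorder does the job.
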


\subsection{Representations}\label{ssect2.2}

Let $H\in \imath\mathfrak{t}_0$ and $(\q, L)$ be the associated
theta stable parabolic algebra. Choosing a positive set of roots $\Delta^+(\l)$ in $\Delta(\l)$ one can
define:
$$\rho_\q=\rho(\u)+\rho^+(\l),$$
where $\rho^+(\l)$ is half the sum of the roots in $\Delta^+(\l)$.
\begin{theo}\label{theo2.0}
 Let
$\lambda$ be the differential of a unitary character of $L$ viewed as an
element of $\mathfrak{t}^*$ such that:
\begin{equation}\label{eq2.5}
(\lambda,\tau)\geq 0\ \forall
\tau\in\Delta(\mathfrak{u}),
\end{equation}
then:
\begin{itemize}
\item  there exists a unique irreducible and unitarisable $(\g,K)$-module, which will
be denoted  $A_\mathfrak{q}(\lambda)$, verifying the two following
properties :
\begin{itemize}
\item the infinitesimal character of $A_\q(\lambda)$ is
  $\lambda+\rho_\mathfrak{q}$ and 
\item the $K$-type
  $\lambda+2\rho(\mathfrak{u}\cap\mathfrak{p})$ appears in $A_\q(\lambda)$.
\end{itemize}
\item Furthermore, every $K$-type of $A_\q(\lambda)$ is of the form:
\begin{equation}\label{eq2.6}
\lambda+2\rho(\u\cap\p)+\sum_{\beta\in\Delta(\u\cap\p)}n_\tau \tau
\end{equation}
with $n_\tau\geq 0$.
\end{itemize}
\end{theo}
The construction of $A_\q(\lambda)$ is done using cohomological induction
(see \cite{KV}). The condition (\ref{eq2.5}) can be relaxed for the
construction of $A_\q(\lambda)$ but it is necessary  to have at the same time
irreducibility, unitarity and unicity. The case where $\lambda=0$ is linked
to cohomology of locally symmetric spaces, see equation (\ref{eq4.1}). 
\begin{rema}
The representation $A_{\q}(\lambda)$ is a discrete series if and only if $L$ is compact.
\end{rema}
In the next paragraph, packets for cohomologically induced representation
following Arthur \cite{Arthur} and Adams and Johnson \cite{AJ} are defined.

\subsection{Packets for cohomological representations}\label{ssect2.3}

Let
$$H=(x_1,\dots,x_a)\otimes (y_1,\dots,y_b)$$
be dominant. Let $(\q,L)$ be the
associated parabolic theta stable algebra. Let $\lambda$ be the differential
of a character of $L$ verifying (\ref{eq2.5}). If $w\in W(\mathfrak{g},\mathfrak{t})$ (the Weyl group of $\t$ in $\g$) then $w\bullet\mathfrak{q}$ is a parabolic theta stable algebra and if $\lambda$ verifies (\ref{eq2.5})
then $w\bullet \lambda$ verify (\ref{eq2.5}) for  $w\bullet \mathfrak{u}$. Adams and Johnson in \cite{AJ} suggest that the following packet: 
\begin{equation}\label{eq2.6.1}
\Pi_{\lambda,\q}=\{A_{w\bullet\mathfrak{q}}(w\bullet\lambda)|\ w\in
W(\g,\t)\}
\end{equation}
is an Arthur packet. Following Arthur (\cite{Arthur}), we explain the  construction of the Arthur parameter that should be associated to this packet. 
\begin{defi}
A $L$-homomorphism $\psi$ between $\ ^LL$ and $\ ^LG$ is a morphism of groups between
$^LL$ and $^LG$ such that the following diagram commutes:
$$\xymatrix{ \ ^LL \ar[rr]^{\psi} \ar[rd] &&\ ^LG \ar[ld] \\ & W_\R}.$$
\end{defi}
We want to built such a morphism. Let
$z_1,\dots,z_r$ be the different values of the $(x_i)$ and the $(y_j)$. Let $(a_i)_{1\leq i\leq r}$ and $(b_i)_{1\leq i\leq r}$ be the integers verifying:
\begin{equation}\label{eq2.6.2}
\begin{split}
(x_1,\dots,x_a)&=\left((\underbrace{z_1,\dots,z_1}_{a_1\text{ times}}),\dots,(\underbrace{z_r,\dots,z_r}_{a_r\text{ times}})\right)\text{ and}\\
(y_1,\dots,y_b)&=\left(\underbrace{z_1,\dots,z_1}_{b_1},\dots,\underbrace{z_r,\dots,z_r}_{b_r}\right).
\end{split}
\end{equation}
Then we can see that:
$$L=\prod_{i=1}^{r}U(a_i,b_i).$$
Let $n_i=a_i+b_i$ and $(k_i)_{1\leq i\leq r}\in\Z_{\geq 0}$. We define:

\begin{equation}\label{eq2.7}
\begin{split}
\psi_\q(g_1,\dots,g_r)&=\left(\begin{array}{ccc}g_1&&\\&\ddots&\\&&g_r\end{array}\right),\\
\psi_\q(z)&=\left(\begin{array}{ccc}\mu(z)^\frac{k_1}{2}I_{n_1}&&\\&\ddots&\\&&\mu(z)^{\frac{k_r}{2}}I_{n_r}\end{array}\right)\text{ and}\\
\psi_\q(j)&=\left(\begin{array}{ccc}\Phi_{n_1}&&\\&\ddots&\\&&\Phi_{n_r}\end{array}\right)\Phi_n^{-1}.
\end{split}
\end{equation}
One can check that $\psi_\q$ extends itself to an $L$-homomorphism: 
$$\ ^LL=^L(U(n_1)\times\dots\times U(n_r))\stackrel{\psi_\q}{\rightarrow}\
^LU(n)=\ ^LG,$$
 if and only if:
\begin{equation}\label{eq2.8}
k_i\equiv n-n_i\ [2].
\end{equation}
Let
$$m_i=-n_1-\dots-n_{i-1}+n_{i+1}+\dots+n_r$$
then clearly the $(m_i)$ satisfy the relation (\ref{eq2.8}) and $\psi_\q$
is defined by the formulas  $(\ref{eq2.7})$ with $k_i=m_i$.
\begin{defi}\label{defi2.4}
Let
$$\psi_\lambda:W_\R\times\text{SL}(2,\C)\rightarrow \ ^LL,$$
be the Arthur parameter associated to the character $\lambda$ (see example \ref{exem1.1}). We set:
$$\psi_{\lambda,\q}=\psi_\q\circ\psi_\lambda.$$
It should be the parameter associated to $\Pi_{\lambda,\q}$.
\end{defi}

Recall that the infinitesimal character of a parameter $\psi$ has been
defined (definition \ref{defi1.4}).
\begin{lemm}\label{lemm2.1}
 Every element of $\Pi_{\lambda,\q}$ has the infinitesimal character of $\psi_{\lambda,\q}$.
\end{lemm}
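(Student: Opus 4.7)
The plan is to reduce, via Weyl invariance of the Harish-Chandra isomorphism, to checking the equality of infinitesimal characters on a single element of $\Pi_{\lambda,\q}$, and then to verify it by a direct coordinate computation. Indeed, Theorem~\ref{theo2.0} gives that $A_\q(\lambda)$ has infinitesimal character $\lambda+\rho_\q$; for any $w\in W(\g,\t)$ the element $w\in W(\g,\t)$ permutes $\Delta(\g,\t)$ and therefore $\rho_{w\bullet\q}=w(\rho_\q)$, so the infinitesimal character of $A_{w\bullet\q}(w\bullet\lambda)$ is $w(\lambda+\rho_\q)$, which represents the same class in $\t^*/W(\g,\t)$. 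It thus suffices to show that $\lambda+\rho_\q$ and the multiset of Definition~\ref{defi1.4} applied to $\psi_{\lambda,\q}$ agree up to permutation.

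For the right-hand side I would compute $\psi_{\lambda,\q}|_{W_\C\times\SL}$ block by block. Writing $\lambda=(k_1,\dots,k_r)$ so that on the $i$-th factor $U(a_i,b_i)$ of $L$ the character is $\det^{k_i}$, Example~\ref{exem1.1} gives
$$
\psi_{\lambda}|_{W_\C\times\SL}=\bigoplus_{i=1}^{r}\mu^{k_i}\otimes\sigma_{n_i}
$$
inside $\prod_i\text{GL}(n_i,\C)\subset\ ^LL$. The block-diagonal $\text{GL}$-part of $\psi_\q$ in (\ref{eq2.7}) preserves this decomposition while inserting it into $\text{GL}(n,\C)$, and the $\C^\times$-part of (\ref{eq2.7}) twists the $i$-th block by $\mu^{m_i/2}$. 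Therefore
$$
\psi_{\lambda,\q}|_{W_\C\times\SL}=\bigoplus_{i=1}^{r}\mu^{k_i+m_i/2}\otimes\sigma_{n_i},
$$
and Definition~\ref{defi1.4} returns the multiset
$$
\bigsqcup_{i=1}^{r}\left\{k_i+\tfrac{m_i}{2}+\tfrac{n_i+1-2t}{2}\ \Big|\ 1\le t\le n_i\right\}.
$$

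For the left-hand side $\lambda+\rho_\q=\lambda+\rho(\u)+\rho^+(\l)$, the coordinates of $\lambda$ on the $i$-th block are the constant $k_i$, and $\rho^+(\l)$, taken with respect to the standard positive system on each $A_{n_i-1}$-factor of $\l$, contributes exactly $\tfrac{n_i+1-2t}{2}$, $t=1,\dots,n_i$, on block $i$. The main computation is $\rho(\u)$: a coordinate of block $i$ pairs with each of the $n_j$ coordinates of block $j\neq i$ in exactly one root of $\Delta(\g,\t)$ which, by the dominance $z_1>\cdots>z_r$ of $H$, lies in $\u$ iff $j>i$. Summing gives a constant $(n_{i+1}+\cdots+n_r-n_1-\cdots-n_{i-1})/2=m_i/2$ on block $i$. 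Adding the three pieces matches the multiset above term by term.

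The main obstacle is precisely this last bookkeeping: one must check that the twist $\mu^{m_i/2}$ hard-wired in (\ref{eq2.7}) is exactly what is needed to reproduce the cross-block contribution of $\rho(\u)$ after translation through Definition~\ref{defi1.4}; the other steps are formal unwindings of the definitions. The symmetry $m_i=-m_{r+1-i}$, consistent with the fact that $\rho(\u)$ is traceless on $\t$, is a useful sanity check along the way.
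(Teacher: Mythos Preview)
Your proof is correct and follows the same approach as the paper: compute $\psi_{\lambda,\q}|_{W_\C\times\SL}=\oplus_i\mu^{\lambda_i+m_i/2}\otimes\sigma_{n_i}$ and match the resulting multiset of Definition~\ref{defi1.4} with $\lambda+\rho_\q$. The paper's version is terser (it simply asserts ``Using definition~\ref{defi1.4}, we find $\lambda+\rho_\q$''), while you spell out the Weyl-invariance reduction for the whole packet and the block-by-block identification $\rho(\u)|_{\text{block }i}=m_i/2$, $\rho^+(\l)|_{\text{block }i}=\bigl(\tfrac{n_i+1-2t}{2}\bigr)_t$; these details are exactly what the paper leaves implicit.
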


\begin{proof}
As $\lambda$ is the differential of a character of $L$:
$$\lambda=\left(\underbrace{\lambda_1,\dots,\lambda_1}_{a_1},\dots,\underbrace{\lambda_r,\dots,\lambda_r}_{a_r}\right)\otimes\left(\underbrace{\lambda_1,\dots,\lambda_1}_{b_1},\dots,\underbrace{\lambda_r,\dots,\lambda_r}_{b_r}\right)$$
with $\lambda_i\in\Z$. The condition (\ref{eq2.5}) can be written as:
\begin{eqnarray*}\label{eq2.9}
\lambda_i-\lambda_{i+1}\geq 0.
\end{eqnarray*}
Since
$${\psi_\lambda}_{|W_\C\times\text{SL}(2,\C)}=\oplus_{i=1}^r\mu^{\lambda_i}\otimes\sigma_{n_i},$$
it comes that:
\begin{equation}\label{eq2.10}
{\psi_{\lambda,\q}}_{|W_\C\times \text{SL}(2,\C)}=\oplus_{i=1}^r
\mu^{\lambda_i+\frac{m_i}{2}}\otimes\sigma_{n_i}.
\end{equation}
Using definition \ref{defi1.4}, we find $\lambda+\rho_\q$. 
\end{proof}

\section{Archimedean theta correspondance and Adams conjecture}\label{sect3}

\subsection{Definition}\label{defi3.1}     
 Let $V'$ (resp. $V$) be a $\C$-vector space with a non degenerate hermitian
(resp. skew hermitian) form
$(,)'$ (resp. $(,)$). Let ${W}=V'\otimes_\C V$. With the form

$$B(,)=\Re\left( (,)'\otimes(,)^\iota\right),$$ 
the $F$-vector space $W$ becomes a non degenerate symplectic space.
Let:
$$G'=U(V')\text{ and }G=U(V).$$
The pair of group: 
$$G'\times G\subset \text{Sp}(W)$$
is a dual pair in the sense of Howe \cite{Howe}. Let $\mathbb{K}$ be a maximal compact subgroup of $\text{Sp}(W)$ such that $G\cap \mathbb{K}$ and $G'\cap\mathbb{K} $ are maximal compact subgroups of $G$ respectively $G'$.
Let ${\text{Mp}({W})}$ be the metaplectic covering of
$\text{Sp}({W})$, it is an extension :
$$1\rightarrow \S^1\rightarrow \text{Mp}({W})\rightarrow
\text{Sp}({W})\rightarrow 1.$$
The metaplectic representation $(\omega, \mathbf{S})$ is a unitary representation of the metaplectic group (see \cite{Howe} for a definition).
 Let $n'$ (resp. $n$) be the
dimension of $V'$ (resp. $V$). Let us choose a pair of unitary characters
$\chi=(\chi_1,\chi_2)$ of $\C^\times$.  Let $\alpha(\chi_1)$ and $\alpha(\chi_2)$ be the integers such that:
$$\chi_i(u)=u^{\alpha(\chi_i)}\ \forall u\in\S^1,$$
these integers determine $\chi$. Let $\epsilon$ be the $\text{sign}$ character of $\R^\times$. We assume that:
$${\chi_1}_{|\R^\times}=\epsilon^{n}\text{ and }{\chi_2}_{|\R^\times}=\epsilon^{n'}.$$
According to Kudla \cite{K} this choice of characters determines a splitting:
$$\iota_\chi:G'\times G\rightarrow \text{Mp}(W),$$
and by restriction of the metaplectic representation of $\text{Mp}(W)$ a
representation $(\omega_\chi,\mathbf{S})$ of $G'\times G$. Let
$\widetilde{\mathbb{K}}$ be the inverse image of $\mathbb{K}$ in the
metaplectic group and $S$ be the $\widetilde{\mathbb{K}}$- finite vectors
of $\mathbf{S}$. We will write $S_\chi$ for the representation of $(\g\times\g',K'\times K)$ on $S$ given by the choice of splitting $\iota_\chi$. The archimedean theta correspondance is defined using the following theorem of Howe \cite{Howe}.
\begin{theo}\label{theo3.1}
Let $\pi'$ be an irreducible  $(\g',K')$-module. There exists at most one irreducible $(\g,K)$-module such that:
$$\hom_{\g'\times\g,K'\times K}(S_\chi,\pi'\otimes\pi)\neq 0,$$
if such a $\pi$ exists, we will denote it $\theta_\chi(\pi')$ and otherwise we will set $\theta_\chi(\pi)=0.$ 
\end{theo}
The $(\g,K)$-module $\theta_\chi(\pi)$ is called the theta lift of
$\pi'$. To compute the theta lift of a representation $\pi'$, one has first to
prove that theta lifts are non zero. We recall two theorems of Jian-Shu Li.

\begin{defi}\label{defi3.2} The dual pair $(G',G)$ is said to be in the stable range if $V$ has an
isotropic subspace of dimension $n'$.
\end{defi}

\begin{theo}\label{theo3.2}\cite{Li2,Li3}
Suppose that:
\begin{itemize} 
\item $\dim V'\leq \dim V$ (resp. $(G',G)$ is in the stable range) and 
\item $\pi'$ is a discrete series (resp. unitarisable).
\end{itemize}
Then $\theta(\pi',V)$ is non zero and unitarisable.
\end{theo}

\subsection{Arthur packets and theta correspondance}\label{thetapsi}

We consider the dual pair $(G',G)$ with:
$$G'=U(a',b')\text{ and } G={U(a,b)}.$$
Let $n'=a'+b'$ and $ n=a+b$. We suppose that $n'\leq n$. Let $\psi'$ be an
Arthur parameter for $G'$.  We
associate to $\psi'$ an Arthur parameter $\theta_\chi(\psi)$ for the group
$G$ defined by:
\begin{eqnarray}\label{eq3.1}
 \theta_\chi(\psi')_{|\C^\times\times\SL}=\mu^{\frac{\alpha(\chi_2)-\alpha(\chi_1)}{2}}\otimes\psi_{|\C^\times\times\SL}\oplus\mu^{\frac{\alpha(\chi_2)}{2}}\otimes\sigma_{n-n'}
\end{eqnarray}
and
\begin{equation}\label{eq3.1.1}
\theta_\chi(\psi')(j)=\left(\begin{array}{cc}\psi'(j)\Phi_{n'}&0\\0&\Phi_{n-n'}\end{array}\right)\Phi_n^{-1}\times
j.
\end{equation}
By \cite{HKS} or \cite{Adams} it is a well defined Arthur parameter.

Adams made the following two conjectures about elements of $\Pi^G(\theta_\chi(\psi'))$.

\begin{conj}\label{conj1}
If $\pi'\in\Pi^{U(a',b')}(\psi')$ then $\theta_\chi(\pi')\in \Pi^{U(a,b)}(\theta_\chi(\psi'))$.
\end{conj}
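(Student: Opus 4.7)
The plan is to prove the conjecture in the generality required for the main theorem, Theorem \ref{theo0.4}: namely, when $\psi' = \psi_{\lambda',\q'}$ arises from a parabolic theta stable algebra $(\q',L')$ and a unitary character $\lambda'$ of $L'$ as in Definition \ref{defi2.4}, with candidate packet $\Pi_{\lambda',\q'}$ of (\ref{eq2.6.1}). Reading off (\ref{eq3.1})--(\ref{eq3.1.1}), the restriction of $\theta_\chi(\psi_{\lambda',\q'})$ to $W_\C \times \SL$ decomposes as a twist of each block of $\psi_{\lambda',\q'}$ together with one extra block $\mu^{\alpha(\chi_2)/2} \otimes \sigma_{n-n'}$. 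Through the recipe (\ref{eq2.7}) this produces a parabolic theta stable algebra $(\q,L)$ of $U(a,b)$ obtained from $(\q',L')$ by adjoining an additional $U(p,q)$-factor with $p + q = n - n'$, together with a corresponding twist $\lambda$ of $\lambda'$; this furnishes the candidate $A_\q(\lambda) \in \Pi_{\lambda,\q}$ for the theta lift $\theta_\chi(A_{\q'}(\lambda'))$. By $W(\g',\t')$-equivariance of the whole construction it suffices to treat the case $w = 1$.

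Two preliminary verifications then arise. First, the theta lift must be nonzero and unitarizable. Since $A_{\q'}(\lambda')$ is unitarizable by Theorem \ref{theo2.0}, and the hypothesis of Theorem \ref{theo0.4} places the pair $(U(a',b'), U(a,b))$ in the stable range of Definition \ref{defi3.2}, Theorem \ref{theo3.2} yields a nonzero unitarizable $\theta_\chi(A_{\q'}(\lambda'))$. Second, its infinitesimal character must agree with the value predicted by $\theta_\chi(\psi_{\lambda',\q'})$: Lemma \ref{lemm2.1} combined with (\ref{eq3.1}) shows that the candidate $A_\q(\lambda)$ has infinitesimal character $\lambda + \rho_\q$ matching the value assigned by Definition \ref{defi1.4}, and the theta lift inherits the same character from the standard compatibility of the theta correspondence with the Harish--Chandra homomorphism.

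The crucial step remaining is to identify $\theta_\chi(A_{\q'}(\lambda'))$ with $A_\q(\lambda)$ itself, rather than merely with another representation sharing this infinitesimal character. By Theorem \ref{theo2.0}, it suffices to exhibit the $K$-type $\lambda + 2\rho(\u \cap \p)$ inside the theta lift and to ensure that every other $K$-type that appears lies in the cone (\ref{eq2.6}). This reduces to an explicit branching analysis of the Fock model of the oscillator representation under $K' \times K$: the target minimal $K$-type should arise from pairing the $K'$-spectrum of $A_{\q'}(\lambda')$ against the leading joint-harmonic component of $S_\chi$, and a monotonicity argument on highest weights rules out lower $K$-types. The hard part will be controlling the $K \times K'$-decomposition of the oscillator representation precisely enough to see this leading $K$-type with the correct multiplicity; this is exactly Li's method of joint harmonics, and it is here that the stable range enters essentially. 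Beyond that range the matching between minimal $K$-types appears to break down, which is why only some cases of the Adams conjecture can be established by this route.
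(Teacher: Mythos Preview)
The statement you are addressing is a conjecture (due to Adams); the paper does not prove it in general and explicitly labels it as such. What the paper does prove is the special case where $\psi'=\psi_{\lambda',\q'}$ comes from a theta stable parabolic and the packet is the Adams--Johnson packet $\Pi_{\lambda',\q'}$; this is the content of Theorem~\ref{theo3.3} and Corollary~\ref{coro3.1}. You correctly restrict to exactly this case, so the comparison is really between your sketch and the paper's proof of Theorem~\ref{theo3.3}.

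Your overall strategy coincides with the paper's: match the Arthur parameters via (\ref{eq3.1})--(\ref{eq3.1.1}); use Theorem~\ref{theo3.2} in the stable range for nonvanishing and unitarity; compute the infinitesimal character of the lift (the paper quotes Przebinda, Theorem~\ref{theo3.5}, which is what your ``standard compatibility with the Harish--Chandra homomorphism'' amounts to); then invoke the uniqueness clause of Theorem~\ref{theo2.0} once the correct $K$-type is produced. Two differences are worth noting. First, for the $K$-type step the paper does not carry out a branching analysis of the Fock model or appeal to Li's joint harmonics; it uses Howe's packaged result (Theorem~\ref{theo3.4}) on $K'$-types of minimal degree, verifies that $\lambda'+2\rho(\u'\cap\p')$ has minimal degree in $A_{\q'}(\lambda')$, and computes its transfer explicitly. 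This is cleaner than what you outline and avoids the hard harmonic analysis you anticipate. Second, your requirement ``to ensure that every other $K$-type that appears lies in the cone~(\ref{eq2.6})'' is superfluous: by the uniqueness in Theorem~\ref{theo2.0}, once the lift is irreducible, unitarizable, has infinitesimal character $\lambda+\rho_\q$, and contains the single $K$-type $\lambda+2\rho(\u\cap\p)$, it is forced to equal $A_\q(\lambda)$; the cone condition is a consequence, not an input.
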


\begin{conj} \label{conj2}
In the stable range (i.e $a'+b'\leq\min(a,b)$) we have:
$$\Pi^{U(a,b)}(\theta_\chi(\psi'))=\cup_{a'+b'=n'}\theta_\chi(\Pi^{U(a',b')}(\psi')).$$
\end{conj}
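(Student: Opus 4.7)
The plan is to reduce Conjecture~\ref{conj2} to the Adams--Johnson description of the packets $\Pi_{\lambda,\q}$ from~(\ref{eq2.6.1}), combined with the compatibility between theta lifting and cohomological induction that is the subject of Theorem~\ref{theo0.4}. As a preliminary step I would identify both sides of the asserted equality in terms of such packets: using the formulas~(\ref{eq2.7}) and~(\ref{eq3.1}), one reads off a standard parabolic theta stable algebra $\q\subset\g$ and a unitary character $\lambda$ of its Levi such that $\theta_\chi(\psi')=\psi_{\lambda,\q}$. The extra summand $\mu^{\alpha(\chi_2)/2}\otimes\sigma_{n-n'}$ appearing in~(\ref{eq3.1}) corresponds to inserting one distinguished factor $U(a_0,b_0)$, with $a_0+b_0=n-n'$, into $L$.

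For the inclusion $\supseteq$, I would pick $\pi'\in\Pi^{U(a',b')}(\psi')$ with $a'+b'=n'$. Since we are in the stable range, Theorem~\ref{theo3.2} guarantees that $\theta_\chi(\pi')$ is nonzero and unitarisable. Writing $\pi'=A_{\q'}(\lambda')$ under the Adams--Johnson description of the source packet, Theorem~\ref{theo0.4} identifies $\theta_\chi(\pi')$ with an $A_{\q}(\lambda)$ obtained from $A_{\q'}(\lambda')$ by adjoining the distinguished block and swapping the signatures of the remaining blocks. A direct calculation using~(\ref{eq3.1}) and Lemma~\ref{lemm2.1} then shows that the Arthur parameter associated to this $A_{\q}(\lambda)$ matches $\theta_\chi(\psi')$ up to a Weyl conjugate, placing $\theta_\chi(\pi')$ in $\Pi_{\lambda,\q}$.

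For the reverse inclusion $\subseteq$, a typical element of the left-hand packet has the form $A_{w\bullet\q}(w\bullet\lambda)$ for some $w\in W(\g,\t)$. The strategy is to use $w$ to prescribe which real form $U(a',b')$ the preimage should come from and then to run the recipe of Theorem~\ref{theo0.4} backwards. Concretely, the $K$-conjugate of $w\bullet\q$ that is standard determines a new location for the distinguished block of size $n-n'$ inside $L$; this location in turn fixes $(a',b')$, roughly by counting the $a_i$'s on one side of the block together with the $b_i$'s on the other, exactly as in the formulas of Theorem~\ref{theo0.4}. Removing that block and swapping signatures of the remaining factors then defines a standard parabolic $\q'$ of $U(a',b')$ and a character $\lambda'$ verifying~(\ref{eq2.5}), and Theorem~\ref{theo0.4} yields $\theta_\chi(A_{\q'}(\lambda'))=A_{w\bullet\q}(w\bullet\lambda)$.

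The main obstacle is twofold. First, the identification of $\Pi^{U(a,b)}(\theta_\chi(\psi'))$ with the Adams--Johnson set $\Pi_{\lambda,\q}$ is itself conjectural, so the argument above is a proof only to the extent that this identification is taken for granted or is independently available in the cohomological setting considered here. Second, the reverse inclusion requires genuine surjectivity: one must verify that \emph{every} Weyl conjugate $w\bullet\q$ is realised as the theta lift of some representation coming from \emph{some} real form $(a',b')$ with $a'+b'=n'$, rather than simply showing that the two sides have some element in common. Making the combinatorial correspondence between $K$-orbits of standard parabolics of $U(a,b)$ with a distinguished block of size $n-n'$ and the signatures $(a',b')$ with $a'+b'=n'$ completely explicit is what carries the weight of the argument, and it is the step I would expect to be the most delicate.
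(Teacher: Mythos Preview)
The statement you are attempting to prove is labelled \emph{Conjecture} in the paper: it is Adams' conjecture~\ref{conj2}, and the paper does \emph{not} prove it in general. What the paper actually establishes is the special case where $\psi'$ is an Adams--Johnson parameter $\psi_{\q',\lambda'}$; this is the last assertion of Corollary~\ref{coro3.1}, and the proof there is one sentence: it is declared ``an easy consequence of the second point of the corollary and of the explicit description of packets $\Pi_{\lambda,\q}$ given by equation~(\ref{eq2.6.1}).'' In other words, once Theorem~\ref{theo3.3} has identified each $\theta_\chi(A_{\q'}(\lambda'))$ with the correct $A_\q(\lambda)$ (up to a character twist), the packet equality is immediate from the fact that the Weyl group of $U(a,b)$ acts transitively on the data $(w\bullet\q,w\bullet\lambda)$ and that varying $(a',b')$ with $a'+b'=n'$ together with the Weyl group of $U(a',b')$ exhausts all such data.

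Your outline is essentially this same argument, spelled out in more detail for both inclusions, and you correctly flag the genuine gap: passing from the Adams--Johnson special case to the full conjecture requires knowing that $\Pi^{U(a,b)}(\theta_\chi(\psi'))$ coincides with $\Pi_{\lambda,\q}$, which is itself conjectural. So there is no error in your reasoning, but you should be aware that what you have sketched is a proof of Corollary~\ref{coro3.1} (last line), not of Conjecture~\ref{conj2}; the paper makes exactly the same distinction and does not claim more. One minor point: the result you invoke is really Theorem~\ref{theo3.3} (and its Corollary~\ref{coro3.1}) rather than Theorem~\ref{theo0.4}, since you need the precise identification of the theta lift including the character twist, not just the shape of the Levi $L'$.
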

We use the notations of paragraph \ref{ssect2.3}. Therefore the notations
$H,\q,L,\lambda,x_i,y_i,\lambda_i,r,z_i,a_i,b_i$ are well defined. We want to
know when the parameter $\psi_{\q,\lambda}$ can be of the form
$\theta_\chi(\psi')$ for some $\chi,a',b',\psi'$. According to equation (\ref{eq2.10}) and (\ref{eq3.1})
there must exist $r_0\in\{1,\dots,r\}$ such that:

\begin{equation}\label{eq3.2}
n_{r_0}=a+b-(a'+b').
\end{equation}

We choose $r_0$, then:
\begin{multline}\label{eq3.3}
{\psi_{\lambda,\q}}_{|\C^\times\times\SL}=\\
\mu^{\lambda_{r_0}+\frac{m_{r_0}-\alpha(\chi_2)}{2}}\otimes\left( \mu^{\frac{\alpha(\chi_2)}{2}}\otimes\sigma_{n_{r_0}}\oplus\mu^{\frac{\alpha(\chi_2)-\alpha(\chi_1)}{2}}\left(\oplus_{i\neq
r_0}
\mu^{\lambda_i-\lambda_{r_0}+\frac{m_i-m_{r_0}+\alpha(\chi_1)}{2}}\otimes\sigma_{n_i}\right)\right).$$
\end{multline}
Define:
$$a'=a_1+\dots+a_{r_0-1}+b_{r_0+1}+\dots+b_r,$$
and:
$$b'=b_1+\dots+b_{r_0-1}+a_{r_0+1}+\dots+a_r.$$
This choice of $a',b'$ satisfies (\ref{eq3.2}). We define an element
$$H'=(x_1',\dots,x_{a'}')\otimes(y_1',\dots,y_{b'}')$$
of the Cartan algebra of $\u(a',b')$ by the equalities:
$$(x_1',\dots,x_{a'}')=\left(\underbrace{z_1,\dots,z_1}_{a_1\text{ times}},\dots,\underbrace{z_{r_0-1},\dots,z_{r_0-1}}_{a_{r_0-1}\text{ times}},\underbrace{z_{r_0+1},\dots,z_{r_0+1}}_{b_{r_0+1}\text{ times}},\dots,\underbrace{z_r,\dots,z_r}_{b_r\text{ times}}\right)$$
and
$$(y_1',\dots,y_{b'}')=\left(\underbrace{z_1,\dots,z_1}_{b_1\text{ times}},\dots,\underbrace{z_{r_0-1},\dots,z_{r_0-1}}_{b_{r_0-1}\text{ times}},\underbrace{z_{r_0+1},\dots,z_{r_0+1}}_{a_{r_0+1}\text{ times}},\dots,\underbrace{z_{r},\dots,z_r}_{a_{r}\text{ times}}\right).$$

\begin{defi}\label{defi3.2.1}
Let $\q$ (resp. $\q'$) be the theta parabolic algebra of $\g$ (resp. $\g'$) obtained from $H$ (resp. $H'$). We will write this relation $\q=\theta(\q')$.
\end{defi}
The objects associated to
   $H'$ according to the definitions of paragraph \ref{ssect2.3} are
   designated by the same notation plus a prime. We have that:
\begin{equation}\label{eq3.4}
\begin{split}
r'&=r-1,\\
L'&=\prod_{i=1}^{r_0-1}U(a_i,b_i)\times\prod_{i=r_0+1}^{r}U(b_i,a_i),\\
n_j'&=n_j\ (j\neq r_0),\\
m_j-m'_j&=n_{r_0}\text{ if }j<r_0\text{ and}\\
m_j-m'_j&=-n_{r_0}\text{ if }j>r_0.\\
\end{split}
\end{equation}

So:
\begin{equation*}
\begin{split}
{\psi}_{|W_\C\times\text{SL}(2,\C)}=&\mu^{\lambda_{r_0}+\frac{m_{r_0}-\alpha(\chi_2)}{2}}\otimes\left(
  \mu^{\frac{\alpha(\chi_2)}{2}}\sigma_{n_{r_0}}\oplus\mu^{\frac{\alpha(\chi_2)-\alpha(\chi_1)}{2}}\right.\\
    &\left.\left(\oplus_{i<
r_0}
\mu^{\lambda_i-\lambda_{r_0}+\frac{m_i'+ n_{r_0}-m_{r_0}+\alpha(\chi_1)}{2}}\otimes\sigma_{n_i}\oplus_{i>
r_0}
\mu^{\lambda_i-\lambda_{r_0}+\frac{m_i'-n_{r_0}-m_{r_0}+\alpha(\chi_1)}{2}}\otimes\sigma_{n_i}\right)\right).
\end{split}
\end{equation*}
Adams conjecture suggests therefore to define:
$$
\lambda_i'=
\left\{\begin{array}{cc}
\lambda_i-\lambda_{r_0}+\frac{n_{r_0}-m_{r_0}+\alpha(\chi_1)}{2}&\text{ if }i<r_0\\
\lambda_i-\lambda_{r_0}+\frac{-n_{r_0}-m_{r_0}+\alpha(\chi_1)}{2}&\text{ if }i>r_0.
\end{array}
\right.
$$
The $(\lambda_i')$ satisfy the conditions (\ref{eq2.5}) and are in
$\Z$. So the $(\g',K')$-module $A_{\q'}(\lambda')$ is well defined irreducible and unitarisable. 
We state our main theorem and some corollaries.
\begin{theo}\label{theo3.3} We have that:
\begin{itemize}
\item $\theta_\chi(\psi_{\q',\lambda'})=\psi_{\q,\lambda-(\lambda_{r_0}+\frac{m_{r_0}-\alpha(\chi_2)}{2})}$.
\item If $\theta_\chi(A_{\q'}(\lambda'))$ is non zero and unitary then:
$$\left(\det\bullet\right)^{\lambda_{r_0}+\frac{m_{r_0}-\alpha(\chi_2)}{2}}\otimes\theta_\chi(A_{\q'}(\lambda'))=A_\q(\lambda).$$
\end{itemize}
\end{theo}

\begin{coro}\label{coro3.1} 
We have that:
\begin{itemize} 
\item  if $L'$ is compact then
$$\left(\det\bullet\right)^{\lambda_{r_0}+\frac{m_{r_0}-\alpha(\chi_2)}{2}}\otimes\theta_\chi(A_{\q'}(\lambda'))=A_\q(\lambda),$$
\item if $a'+b'\leq \min(a,b)$ (in the stable range):
$$
\left\{
\begin{array}{ll}
\left(\det\bullet\right)^{\lambda_{r_0}+\frac{m_{r_0}-\alpha(\chi_2)}{2}}\otimes\theta_\chi(A_{\q'}(\lambda'))=A_\q(\lambda)\text{ and}\\$$

\Pi^{U(a,b)}(\psi_{\q,\lambda-(\lambda_{r_0}+\frac{m_{r_0}-\alpha(\chi_2)}{2})})=\cup_{a'+b'=n-n'}\theta_\chi(\Pi^{U(a',b')}(\psi_{\q',\lambda'}))
\end{array}
\right..
$$

\end{itemize}
\end{coro}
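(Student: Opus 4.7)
The plan is to deduce both bullets directly from Theorem \ref{theo3.3} by verifying, in each case, the non-vanishing and unitarity hypothesis $\theta_\chi(A_{\q'}(\lambda'))\neq 0$ needed to invoke its second clause. Jian-Shu Li's result (Theorem \ref{theo3.2}) supplies this in the two regimes considered, after which the packet equality in the stable range will be obtained by spreading the first equation over the full Weyl orbit that constitutes the Adams--Johnson packet.

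For the first bullet, when $L'$ is compact the remark following Theorem \ref{theo2.0} guarantees that $A_{\q'}(\lambda')$ is a discrete series of $U(a',b')$. Since we work under the standing assumption $n'\leq n$ of paragraph \ref{thetapsi}, the first clause of Theorem \ref{theo3.2} applies and gives that $\theta_\chi(A_{\q'}(\lambda'))$ is nonzero and unitarisable. The second clause of Theorem \ref{theo3.3} then yields the asserted identification up to the central twist by $(\det\bullet)^{\lambda_{r_0}+(m_{r_0}-\alpha(\chi_2))/2}$. For the first equation of the second bullet, the stable-range hypothesis $a'+b'\leq\min(a,b)$ is exactly Definition \ref{defi3.2}; combined with the unitarity of $A_{\q'}(\lambda')$ provided by Theorem \ref{theo2.0}, the second clause of Theorem \ref{theo3.2} again yields non-vanishing and unitarity, and Theorem \ref{theo3.3} closes the argument.

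The main obstacle is the packet equality, which is a case of Adams' Conjecture \ref{conj2}. The strategy is to exploit the description (\ref{eq2.6.1}) of the Adams--Johnson packet as the $W(\g,\t)$-orbit of $A_\q(\lambda-\text{shift})$. For each $w\in W(\g,\t)$, rearranging $w\cdot H$ into a dominant representative produces a new standard theta stable parabolic of $\g$ with a Levi of the form $\prod U(a_i^w,b_i^w)$, and the auxiliary group $U(a'_w,b'_w)$ produced by the construction of paragraph \ref{thetapsi} will sweep out all pairs with $a'_w+b'_w=n'$ as $w$ varies. Applying the first equation of the second bullet (which, being in the stable range, applies uniformly to every such $(a'_w,b'_w)$) to each Weyl translate establishes the inclusion $\supseteq$; the reverse inclusion follows from the injectivity of the Howe correspondence (Theorem \ref{theo3.1}) together with the matching of Arthur parameters provided by the first clause of Theorem \ref{theo3.3}. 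The delicate point to verify carefully is that the central twists absorbed into $(\det\bullet)^{\cdots}$ are consistent across Weyl translates, so that the union on the right-hand side is exactly the orbit defining the left-hand side.
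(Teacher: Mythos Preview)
Your argument is correct and follows the paper's route exactly: the first two equalities come from combining Theorem~\ref{theo3.2} (Li's non-vanishing and unitarity, in the discrete-series and stable-range regimes respectively) with the second clause of Theorem~\ref{theo3.3}, and the packet equality is obtained by running the first equation over the Weyl orbit via the description~(\ref{eq2.6.1}), which is precisely what the paper means by ``an easy consequence of the second point of the corollary and of the explicit description of packets''. One minor point: your appeal to Howe injectivity (Theorem~\ref{theo3.1}) for the reverse inclusion is superfluous --- once the first equation holds for every Weyl translate and the combinatorial surjectivity you assert is checked, both inclusions follow directly from the explicit packet description, with no need to invoke uniqueness in the Howe correspondence.
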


\begin{proof}
We deduce the corollary from the theorem. The first two equalities are 
consequences of the second point of theorem \ref{theo3.3} and of theorem
\ref{theo3.2}. The last equality is an easy consequence of
the second point of the corollary and of the explicit description of
packets $\Pi_{\lambda,\q}$ given by equation (\ref{eq2.6.1}).

\end{proof}

\begin{proof}[Beginning of the proof of theorem \ref{theo3.3}]

According to the discussion just before the theorem, we have choosen $r_0$ and integers $(\lambda'_i)$ to have:
$${\theta_\chi(\psi_{\q',\lambda'})}_{|W_\C\times\text{SL}(2,\C)}={\psi_{\q,\lambda-(\lambda_{r_0}+\frac{m_{r_0}-\alpha(\chi_2)}{2})}}_{|W_\C\times\text{SL}(2,\C)},$$
the extension of the equality to $W_\R$ is just a verification using the formulas for
the action of $j$ (equations (\ref{eq2.7}) and (\ref{eq3.1.1})).

We explain the scheme of the proof of the second point, our main result. Since we assume
that $\lambda$ verifies (\ref{eq2.5}) and that the theta lift is non zero and
unitary by theorem \ref{theo2.0}, it is enough to prove that the $(\g,K)$-module
 $$\left(\det\bullet\right)^{\lambda_{r_0}+\frac{m_{r_0}-\alpha(\chi_2)}{2}}\otimes\theta_\chi(A_{\q'}(\lambda'))$$
\begin{itemize}
\item 
  has infinitesimal character $\lambda+\rho_\q$ and
\item contains the $K$-type of highest weight $\lambda+2\rho(\u\cap\p)$.
\end{itemize}
We prove these two points in the next section (corollary \ref{coro3.3} and
corollary \ref{coro3.4}).

\end{proof}
An other reformulation of the corollary is:
\begin{coro}\label{coro3.2} Let $(\q,L)$ a standard parabolic theta stable algebra with:
$$L=\prod_{i=1}^rU(a_i,b_i)$$
and $(\lambda_i)\in \Z^{a+b}$ such that $\lambda_i\geq\lambda_{i+1}$. If:
$$\max(a_i+b_i)\geq\max(a,b)$$
then up to a character $A_\q(\lambda)$ is a theta lift from a
representation $A_\q'(\lambda')$ such that $\lambda'$ verifies the condition $(\ref{eq2.5})$ and 
$$L'=\prod_{i=1}^{r_0-1}U(a_i,b_i)\times\prod_{i=r_0+1}^{r}U(b_i,a_i).$$
\end{coro}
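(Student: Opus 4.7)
The plan is to reduce Corollary \ref{coro3.2} to the stable-range part of Corollary \ref{coro3.1}. The point is that the hypothesis $\max_i(a_i+b_i)\geq\max(a,b)$ is tailored precisely so that the dual pair appearing in the construction preceding Theorem \ref{theo3.3} automatically lies in the stable range.

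First I would pick $r_0\in\{1,\dots,r\}$ that realizes $n_{r_0}=\max_i(a_i+b_i)$ and then use
\[
a'+b' \;=\; \sum_{i\neq r_0}(a_i+b_i) \;=\; n-n_{r_0} \;\leq\; (a+b)-\max(a,b) \;=\; \min(a,b),
\]
which is the stable range condition for $(U(a',b'),U(a,b))$. This is the one numerical step of the argument, and the only place where the hypothesis $\max_i(a_i+b_i)\geq\max(a,b)$ is actually used.

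Next I would unpack the construction from paragraph \ref{thetapsi}: form $H'$ by deleting the $r_0$-th block of $H$ and swapping the roles of $a_i$ and $b_i$ for $i>r_0$, so that the resulting standard theta stable parabolic $(\q',L')$ has
\[
L'=\prod_{i=1}^{r_0-1}U(a_i,b_i)\times\prod_{i=r_0+1}^{r}U(b_i,a_i),
\]
as required by the corollary. Fix a pair of unitary characters $\chi=(\chi_1,\chi_2)$ satisfying the parity conditions of paragraph \ref{defi3.1}, and set
\[
\lambda'_i=\lambda_i-\lambda_{r_0}+\tfrac{\pm n_{r_0}-m_{r_0}+\alpha(\chi_1)}{2}
\]
(the sign $+$ for $i<r_0$, $-$ for $i>r_0$) as prescribed just before Theorem \ref{theo3.3}. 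The paper already observes that such $\lambda'$ lies in $\Z^{a'+b'}$ and satisfies (\ref{eq2.5}); hence $A_{\q'}(\lambda')$ is well-defined, irreducible and unitary.

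Finally, since the stable range inequality has just been verified, I would invoke the second bullet of Corollary \ref{coro3.1} to conclude
\[
(\det\bullet)^{\lambda_{r_0}+\frac{m_{r_0}-\alpha(\chi_2)}{2}}\otimes\theta_\chi(A_{\q'}(\lambda'))=A_\q(\lambda),
\]
which is exactly the statement that $A_\q(\lambda)$ is, up to a unitary character, the theta lift of $A_{\q'}(\lambda')$. There is no genuine obstacle in this argument: the substantive archimedean content is already packaged inside Theorem \ref{theo3.3} and Corollary \ref{coro3.1}, and the entire job here is the trivial rearrangement $n-\max_i(a_i+b_i)\leq\min(a,b)$ together with the identification of the data $(\q',L',\lambda')$ produced by the construction of paragraph \ref{thetapsi} with the data named in the statement of the corollary.
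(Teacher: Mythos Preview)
Your proposal is correct and matches the paper's treatment: the paper presents Corollary \ref{coro3.2} simply as ``another reformulation'' of Corollary \ref{coro3.1} without further argument, and what you have written is precisely that reformulation made explicit --- choose $r_0$ maximizing $n_{r_0}$, observe $a'+b'=n-n_{r_0}\leq n-\max(a,b)=\min(a,b)$, and apply the stable-range clause of Corollary \ref{coro3.1}.
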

\subsection{Proof of theorem \ref{theo3.3}}

\subsection{Types}
We identify $K'$-representations with their highest weights. These
are exactly the dominant elements of $\Z^{a'}\times\Z^{b'}$. We write a weight $\mu'$ in the following way
$$\mu'=\frac{\alpha(\chi_1)}{2}+\left\{(x_1',\dots,x_a')\otimes(y_1',\dots,y_b')\right\}$$
with the $(x_i')$ and $(y_i')$ in $\frac{a+b}{2}+\Z$. We define for $\mu'$:
$$\text{deg}(\mu')=\sum_i|x_i-\frac{a-b}{2}|+\sum_i|y_i-\frac{b-a}{2}|.$$
The following theorem describes parts of the relationship between theta
correspondance and types.
\begin{theo}[Howe]\label{theo3.4}
Let $\pi'$ be an irreducible $(\g',K')$-module such that
$\pi=\theta_\chi(\pi')$ is non zero. Let $\mu'$ be a $K'$-type of $\pi'$ of minimal degree. Then if we write:
\begin{equation}\label{mu}
\begin{split}
\mu'=&\frac{\alpha(\chi_1)}{2}+\\
&\left\{\frac{a-b}{2}+(a_1,\dots,a_t,0,\dots,0,-b_1,\dots,-b_u)\otimes \frac{b-a}{2}+(c_1,\dots,c_v,0,\dots,0,-d_1,\dots,-d_w)\right\}
\end{split}
\end{equation}
with $a_i,b_i,c_i,d_i>0$ then:
\begin{itemize}
\item $t+w\leq a$ and $v+u\leq b$.
\item 
\begin{equation}\label{mu'}
\begin{split}
\theta(\mu')=&\frac{\alpha(\chi_2)}{2}+\\
&\left\{\frac{a'-b'}{2}+(a_1,\dots,a_t,0,\dots,0,-d_1,\dots,-d_w)\otimes \frac{b'-a'}{2}+(c_1,\dots,c_v,0,\dots,0,-b_1,\dots,-b_u)\right\}
\end{split}
\end{equation}
is a ${K}$-type of $\pi$.
\end{itemize}
\end{theo}
Recall that in paragraph \ref{ssect2.1}, we have associated to $H$ a pair of Young
 diagrams. Let

\begin{align}
m&=b_{1}+\dots+b_{r_0-1}&s&=b_{r_0+1}+\dots+b_r\\
k&=a_1+\dots+a_{r_0-1}&l&=a_{r_0+1}+\dots+a_r.
\end{align}
Then because of equation (\ref{eq2.6.2}), the pair of diagrams $(\alpha,\beta)$ 
associated to $H$ has the following shape:


\begin{center}
\includegraphics[width=7cm, height=4cm]{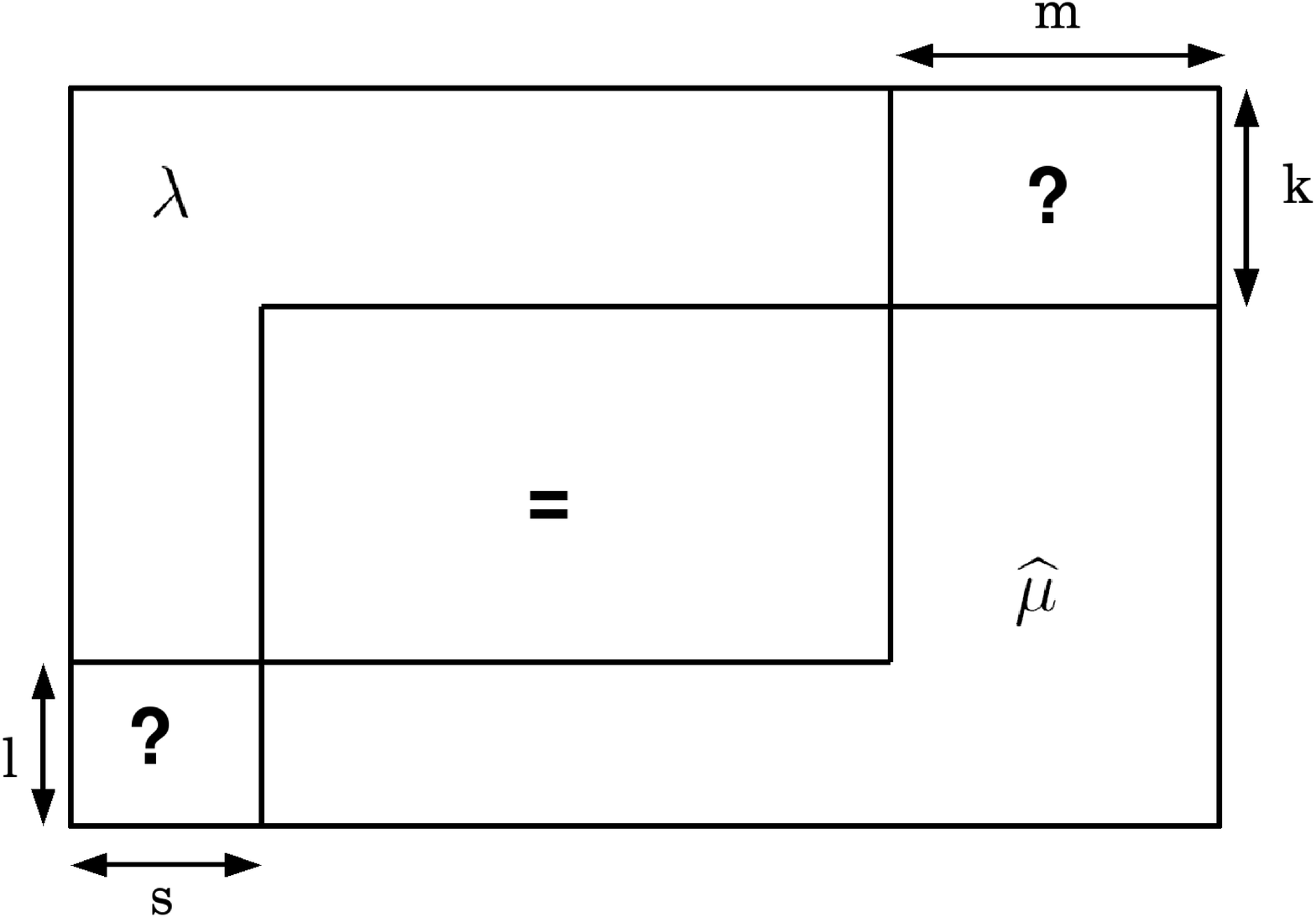}
\end{center}

\begin{itemize}
\item In the upper box with a question mark the pair of partitions $(\alpha,\beta )$ is
equal to
$$\alpha_1\subset\beta_1\subset k\times m\text{ and }$$ 
\item  In the lower box with a question mark the pair of partitions $(\alpha,\beta )$ is
equal to $\alpha_2\subset\beta_2\subset l\times s$.
\end{itemize}
The two pairs of partitions $(\alpha_i,\beta_i)$ $i=1,2$ are compatible
pairs of partitions and the pair of diagrams $(\alpha',\beta')$ associated to $H'$ is:
$$\begin{array}{|c|c|} \hline
+&\alpha_1\subset\beta_1\\ \hline
\widetilde{\widehat{\beta_2}}\subset \widetilde{\widehat{\alpha_2}}&-\\ \hline
\end{array}
$$
According to equation (\ref{eq2.4}):
\begin{equation}
\begin{split}
2\rho(\u\cap\p)&=(\underbrace{\alpha_1(i)+\beta_1(i)+b-2m}_{1\leq i\leq
k},\underbrace{s-m,\dots,s-m}_{a_{r_0}\text{ times}},\underbrace{\alpha_2(i)+\beta_2(i)-b}_{1\leq
i\leq l})\\
&\otimes(\underbrace{a-\widetilde{\alpha_1}(m+1-i)-\widetilde{\beta_1}(m+1-i)}_{1\leq
i\leq m},\underbrace{l-k,\dots,l-k}_{b_{r_0}\text{ times}},\underbrace{2l-a-\widetilde{\alpha_2}(s+1-i)-\widetilde{\beta_1}(s+1-i)}_{1\leq
i\leq s})
\end{split}
\end{equation}
and:
\begin{equation}
\begin{split}
2\rho(\u'\cap\p')&=(\underbrace{\alpha_1(i)+\beta_1(i)+l-m}_{1\leq i\leq
k},\underbrace{l-m-\widetilde{\alpha_2}(s+1-i)-\widetilde{\beta_2}(s+1-i)}_{1\leq
i\leq s})\\
&\otimes(\underbrace{k+s-\widetilde{\alpha_1}(m+1-i)-\widetilde{\beta_1}(m+1-i)}_{1\leq
i\leq m},\underbrace{\alpha_2(i)+\beta_2(i)-a'}_{1\leq
i\leq l}).
\end{split}
\end{equation}
\begin{lemm} We have:
\begin{itemize}
\item that the vector $\lambda'+2\rho(\u'\cap\p')$ is of the form $(\ref{mu})$ with
  $t=k,u=s,v=m$ and $w=l$,
\item the equality
  $\lambda_{r_0}+\frac{m_{r_0}-\alpha(\chi_2)}{2}+\theta_\chi(\lambda'+2\rho(\u'\cap\p'))=\lambda+2\rho(\u\cap\p)$ and
\item that the vector $\lambda'+2\rho(\u'\cap\p')$ is of minimal degree in $A_{\q'}(\lambda')$.
\end{itemize}
\end{lemm}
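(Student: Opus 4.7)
The plan is to verify the three assertions in turn: the first two by direct calculation with the explicit coordinate formulas set up in Section~\ref{sect3}, and the third by a combinatorial minimality argument based on the block structure of $(\alpha',\beta')$.

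For part~(i), I expand $\mu':=\lambda'+2\rho(\u'\cap\p')$ coordinate by coordinate, using the explicit formula for $\lambda'$ stated just before the theorem together with the displayed expression for $2\rho(\u'\cap\p')$. The algebraic simplification driving the computation is the identity $\frac{n_{r_0}-m_{r_0}}{2}=\frac{a+b}{2}-l-s$, which follows at once from $m_{r_0}=-(k+m)+(l+s)$ and $n_{r_0}=a_{r_0}+b_{r_0}=(a-k-l)+(b-m-s)$. With this identity the shift $\frac{\alpha(\chi_1)}{2}$ and the centering terms $\frac{a-b}{2},\frac{b-a}{2}$ collapse, and the residual for the top $k$ entries of the $x$-part (coming from blocks $j<r_0$) takes the clean form $(\lambda_j-\lambda_{r_0})+b_{r_0}+\alpha_1(i)+\beta_1(i)\geq 0$, with analogous non-negative or non-positive expressions in the other three blocks. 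This shows $\mu'$ has the shape~(\ref{mu}) with $(t,u,v,w)=(k,s,m,l)$. For part~(ii), I then apply formula~(\ref{mu'}) of Theorem~\ref{theo3.4} to this decomposition: the theta correspondence swaps the negative blocks of the $x$- and $y$-parts (of sizes $u=s$ and $w=l$) and replaces the $\frac{a'-b'}{2}$ centering by $\frac{a-b}{2}$; adding the character twist $\lambda_{r_0}+\frac{m_{r_0}-\alpha(\chi_2)}{2}$ and matching block by block against the displayed formula for $2\rho(\u\cap\p)$ yields $\lambda+2\rho(\u\cap\p)$ after a similar collapse of shifts.

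For part~(iii), Theorem~\ref{theo2.0} ensures every $K'$-type of $A_{\q'}(\lambda')$ is of the form $\mu''=\mu'+\sum_{\tau\in\Delta(\u'\cap\p')}n_\tau\tau$ with $n_\tau\geq 0$. Setting $\epsilon_i^x:=\text{sign}(x_i'-\frac{a-b}{2})$ and $\epsilon_j^y:=\text{sign}(y_j'-\frac{b-a}{2})$, taken consistently with the block assignment of~(i) on any vanishing coordinates, the elementary bound $|x+\Delta|\geq\epsilon(x+\Delta)$ summed over all coordinates yields
$$\text{deg}(\mu'')-\text{deg}(\mu')\geq\sum_\tau n_\tau\,\delta(\tau),$$
where $\delta(\tau)=\epsilon_i^x-\epsilon_{b'+1-j}^y$ for a Type~A root $\tau=x_i'-y'_{b'+1-j}$ with $(i,j)\in\alpha'$, and $\delta(\tau)=\epsilon_{b'+1-j}^y-\epsilon_i^x$ for a Type~B root $\tau=y'_{b'+1-j}-x_i'$ with $(i,j)\notin\beta'$. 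A four-case analysis on the position of $(i,j)$ in the block diagram of $(\alpha',\beta')$ then gives $\delta(\tau)\in\{0,2\}$ in every case: the fully filled top-left block gives $\delta=2$ for Type~A (since $\epsilon^x=+1$ and $\epsilon^y=-1$ there), the fully empty bottom-right block gives $\delta=2$ for Type~B (since $\epsilon^x=-1$ and $\epsilon^y=+1$), and the two mixed blocks (top-right and bottom-left) give $\delta=0$ by matching signs. Consequently the total degree change is non-negative, and $\mu'$ has minimal degree.

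The main obstacle is the bookkeeping that makes the block labelling used in~(iii) coherent with the sign decomposition produced in~(i): one must verify that the rightmost $m$ columns of the $b'$-column diagram of $(\alpha',\beta')$ correspond to the positive-shifted $y'$-values (those from blocks $j<r_0$) while the leftmost $l$ columns correspond to the negative-shifted ones (blocks $j>r_0$). This is precisely the convention under which the described corner structure of $(\alpha',\beta')$—fully filled on the top-left, $(\alpha_1,\beta_1)$ on the top-right, $(\widetilde{\widehat{\beta_2}},\widetilde{\widehat{\alpha_2}})$ on the bottom-left, fully empty on the bottom-right—is compatible with the sign pattern required for the degree inequality.
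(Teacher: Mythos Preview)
Your proof is correct and follows essentially the same approach as the paper. For parts (i) and (ii) both you and the paper carry out the direct coordinate computation from the explicit formulas for $\lambda'$ and $2\rho(\u'\cap\p')$, using the same identities (your $\tfrac{n_{r_0}-m_{r_0}}{2}=\tfrac{a+b}{2}-l-s$ is the paper's $n_{r_0}=a+b-(l+s+k+m)$, $m_{r_0}=-(k+m)+(l+s)$ repackaged). For part (iii) the paper simply declares the result an easy consequence of (\ref{eq2.6}) and (\ref{eq2.3}); your block-by-block sign analysis showing $\delta(\tau)\in\{0,2\}$ is exactly the natural way to make that claim precise, so it is the same argument written out in full rather than a different one.
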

\begin{proof} 

We prove the first point. Using that:
\begin{equation}
\begin{split}
n_{r_0}&=a+b-(l+s+k+m)\text{ and}\\
m_{r_0}&=-(k+m)+(l+s).
\end{split}
\end{equation}
it comes that:
\begin{equation}
\begin{split}
&\lambda'+2\rho(\u'\cap\p')-(\frac{\alpha(\chi_1)}{2})=(\underbrace{\lambda_i-\lambda_{r_0},\dots,\lambda_i-\lambda_{r_0}}_{i<r_0\
a_i\text{ times}},\underbrace{\lambda_i-\lambda_{r_0},\dots,\lambda_i-\lambda_{r_0}}_{i>r_0\
b_i\text{ times}})\\
&\otimes(\underbrace{\lambda_i-\lambda_{r_0},\dots,\lambda_i-\lambda_{r_0}}_{i<r_0\
b_i\text{ times}},\underbrace{\lambda_i-\lambda_{r_0},\dots,\lambda_i-\lambda_{r_0}}_{i>r_0\
a_i\text{ times}})\\
&+\left[\frac{a-b}{2}+\left(\underbrace{\alpha_1(i)+\beta_1(i)+b-(s+m)}_{1\leq i\leq
k},\underbrace{-(a-(k+l))-\widetilde{\alpha_2}(s+1-i)-\widetilde{\beta_2}(s+1-i)}_{1\leq
i\leq s}\right)\right]\\
&\otimes\left[\frac{b-a}{2}+\left(\underbrace{a-(k+l)+(k-\widetilde{\alpha_1}(m+1-i))+(k-\widetilde{\beta_1}(m+1-i))}_{1\leq
i\leq m}\right.\right.\\
&\left.\left.,(\underbrace{\alpha_2(i)-s)+(\beta_2(i)-s)-(b-(m+s))}_{1\leq
i\leq l}\right)\right].
\end{split}
\end{equation}
So the first point is proved.

Let us prove the second point. Using equation (\ref{mu'}), we see that:
\begin{equation}
\begin{split}
&\left(\lambda_{r_0}+\frac{m_{r_0}-\alpha(\chi_2)}{2}\right)+\theta(\lambda'+2\rho(\u'\cap\p'))=\lambda+(\frac{m_{r_0}}{2})\\
&+\frac{a'-b'}{2}+((\underbrace{\alpha_1(i)+\beta_1(i)+b-(s+m)}_{1\leq i\leq
k},\underbrace{0,\dots,0}_{a-(k+l)},\underbrace{\alpha_2(i)+\beta_2(i)-b+m-s}_{1\leq
i\leq l})\\
&\otimes\frac{b'-a'}{2}+(\underbrace{a+k-l-\widetilde{\alpha_1}(m+1-i))-\widetilde{\beta_1}(m+1-i)}_{1\leq
i\leq m},\underbrace{0,\dots,0}_{b-(m+s)},\\
&\underbrace{-a+k+l-\widetilde{\alpha_2}(s+1-i)-\widetilde{\beta_2}(s+1-i)}_{1\leq
i\leq s})).\\
\end{split}
\end{equation}
Using the following two computations:
\begin{equation}
\begin{split}
\frac{a'-b'}{2}+\frac{m_{r_0}}{2}&=s-m\text{ and}\\
\frac{b'-a'}{2}+\frac{m_{r_0}}{2}&=l-k,
\end{split}
\end{equation}
the second point is found..

The last point is an easy consequence of the description of
${K'}$-types of $A_{\q'}(\lambda')$ given by equation (\ref{eq2.6}) and of
the description of $\Delta(\u'\cap\p')$ in terms of diagrams given by (\ref{eq2.3}).
\end{proof}

\begin{coro}\label{coro3.3}
If
$\left(\det\bullet\right)^{\lambda_{r_0}+\frac{m_{r_0}-\alpha(\chi_2)}{2}}\otimes\theta_\chi(A_{\q'}(\lambda'))$
is non zero then $\lambda+2\rho(\u\cap\p)$ is one of these $K$-types.
\end{coro}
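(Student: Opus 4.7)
The plan is to deduce the statement directly from Howe's theorem on minimal $K'$-types (Theorem \ref{theo3.4}), using the three bullets of the preceding lemma as input. All the hard bookkeeping has been absorbed into that lemma; what remains is to assemble the pieces.

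First, I would observe that by the third bullet of the lemma, the weight $\mu' := \lambda' + 2\rho(\u'\cap\p')$ is a $K'$-type of $A_{\q'}(\lambda')$ of minimal degree; by the first bullet, it has exactly the shape (\ref{mu}) required as the hypothesis of Theorem \ref{theo3.4}, with $(t,u,v,w) = (k,s,m,l)$. The hypothesis of the corollary is that the twisted lift $(\det\bullet)^{\lambda_{r_0}+\frac{m_{r_0}-\alpha(\chi_2)}{2}}\otimes\theta_\chi(A_{\q'}(\lambda'))$ is non zero, which forces $\theta_\chi(A_{\q'}(\lambda'))$ itself to be non zero. Thus Theorem \ref{theo3.4} applies and produces $\theta(\mu')$ as a $K$-type of $\theta_\chi(A_{\q'}(\lambda'))$, computed by formula (\ref{mu'}).

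Second, tensoring by the one-dimensional character $(\det\bullet)^{\lambda_{r_0}+\frac{m_{r_0}-\alpha(\chi_2)}{2}}$ translates every $K$-type of $\theta_\chi(A_{\q'}(\lambda'))$ by the scalar vector $\lambda_{r_0}+\frac{m_{r_0}-\alpha(\chi_2)}{2}$ (viewed as a constant weight on the diagonal). Applied to the $K$-type $\theta(\mu')$ just produced, this yields the weight $\lambda_{r_0}+\frac{m_{r_0}-\alpha(\chi_2)}{2}+\theta_\chi(\lambda'+2\rho(\u'\cap\p'))$ as a $K$-type of the twisted theta lift. The second bullet of the lemma identifies this weight with $\lambda+2\rho(\u\cap\p)$, finishing the argument.

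I do not anticipate a real obstacle: the substance of the corollary is the verification that Howe's theorem is applicable and that its output matches the expected highest weight. Both points are established by the lemma. The only thing one should keep in mind is that Theorem \ref{theo3.4} requires $\mu'$ to be of minimal degree (not merely a lowest $K'$-type in some other sense), which is precisely the content of the third bullet of the lemma and is derived from the description (\ref{eq2.6}) of the $K'$-types of $A_{\q'}(\lambda')$ together with the shape of $\Delta(\u'\cap\p')$ given by (\ref{eq2.3}).
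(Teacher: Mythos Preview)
Your proposal is correct and is exactly the argument the paper intends: the corollary is stated without proof because it is immediate from the three bullets of the preceding lemma combined with Howe's Theorem \ref{theo3.4}, and you have spelled this out precisely. One minor phrasing point: the shape \eqref{mu} is not a hypothesis of Theorem \ref{theo3.4} but simply the decomposition of any dominant weight; the role of the first bullet is to identify the parameters $(t,u,v,w)=(k,s,m,l)$ so that formula \eqref{mu'} can be applied explicitly, which is what feeds into the second bullet.
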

It remains to calculate the infinitesimal character of this $(\g,K)$-module.
\subsection{Infinitesimal character}
Infinitesimal characters of theta lifts have been determined by Przebinda.
\begin{theo}[\cite{Pr}]\label{theo3.5} If $\pi'$ has infinitesimal character 
$$\frac{\alpha(\chi_1)}{2}+\left(t_1,\dots,t_{a'+b'}\right),$$
then $\theta(\pi')$ has infinitesimal character:
$$\frac{\alpha(\chi_2)}{2}+\left(t_1,\dots,t_{a'+b'},\frac{n_{r_0}-2i+1}{2}\right)\ 1\leq i\leq n_{r_0}.$$
\end{theo}

The infinitesimal character of
$A_{\q'}(\lambda')$ is:
$$\left(\lambda_i'+\frac{m_i'}{2}+\frac{n_i+1-2k}{2}\right)\ i\neq r_0\ 1\leq k\leq n_i$$
(see lemma \ref{lemm2.1}).

So by theorem \ref{theo3.5} the infinitesimal character of
$\left(\det\bullet\right)^{\lambda_{r_0}+\frac{m_{r_0}-\alpha(\chi_2)}{2}}\otimes\theta(A_{\q'}(\lambda'))$
is (using the definition of $\lambda'$):
\begin{equation}
\begin{split}
\left(\lambda_i+\frac{n_{r_0}+m_i'}{2}+\frac{n_i-2k+1}{2}\ i<r_0\ 1\leq k\leq
n_i\right.\\
,\lambda_{r_0}+\frac{m_{r_0}}{2}+\frac{n_{r_0}-2k+1}{2}\ 1\leq k\leq
n_{r_0}\\
\left.\lambda_i+\frac{-n_{r_0}+m_i'}{2}+\frac{n_i-2k+1}{2}\ i>r_0\ 1\leq
  k\leq n_i\right).
\end{split}
\end{equation}
But by the formula for the $(m'_i) $ (see equation (\ref{eq3.4})) this is equal to the infinitesimal character of
$A_\q(\lambda)$. Finally we have:
\begin{coro}\label{coro3.4}
If
$\left(\det\bullet\right)^{\lambda_{r_0}+\frac{m_{r_0}-\alpha(\chi_2)}{2}}\otimes\theta_\chi(A_{\q'}(\lambda'))=A_\q(\lambda)$
is non zero its infinitesimal character is $\lambda+\rho_\q$.
\end{coro}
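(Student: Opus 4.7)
The plan is to chain together three known inputs: the infinitesimal character of $A_{\q'}(\lambda')$ (computed in Lemma \ref{lemm2.1}), Przebinda's formula for the infinitesimal character of an archimedean theta lift (Theorem \ref{theo3.5}), and Theorem \ref{theo2.0} which identifies $\lambda+\rho_\q$ as the infinitesimal character of $A_\q(\lambda)$. The hypothesis that the tensor product is non-zero lets me speak of a well-defined infinitesimal character at all; the $\det$-twist shifts it by a known scalar, so the content of the corollary is purely a bookkeeping identity between weight multisets.

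First, I would write out the infinitesimal character of $A_{\q'}(\lambda')$ using Lemma \ref{lemm2.1} applied to $\psi_{\q',\lambda'}$, which gives the multiset
\[
\Bigl(\lambda_i'+\tfrac{m_i'}{2}+\tfrac{n_i+1-2k}{2}\Bigr),\quad i\neq r_0,\ 1\leq k\leq n_i.
\]
Next I would apply Theorem \ref{theo3.5}: the archimedean theta lift $\theta_\chi(A_{\q'}(\lambda'))$ has infinitesimal character obtained from the one above by shifting all entries by $\tfrac{\alpha(\chi_2)-\alpha(\chi_1)}{2}$ and appending the block $\bigl(\tfrac{n_{r_0}-2k+1}{2}\bigr)_{1\leq k\leq n_{r_0}}$. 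Twisting by $(\det\bullet)^{\lambda_{r_0}+\frac{m_{r_0}-\alpha(\chi_2)}{2}}$ then shifts every entry by the scalar $\lambda_{r_0}+\tfrac{m_{r_0}-\alpha(\chi_2)}{2}$.

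Finally I would substitute the explicit value of $\lambda_i'$ fixed in Section \ref{sect3}, namely $\lambda_i'=\lambda_i-\lambda_{r_0}+\tfrac{\pm n_{r_0}-m_{r_0}+\alpha(\chi_1)}{2}$ for $i\lessgtr r_0$, and use the relation $m_i-m_i'=\pm n_{r_0}$ from (\ref{eq3.4}) to see that all dependence on $n_{r_0}$, $\alpha(\chi_1)$, $\alpha(\chi_2)$ and $\lambda_{r_0}$ cancels. What remains is exactly the multiset
\[
\Bigl(\lambda_i+\tfrac{m_i}{2}+\tfrac{n_i+1-2k}{2}\Bigr),\quad 1\leq i\leq r,\ 1\leq k\leq n_i,
\]
which by Lemma \ref{lemm2.1} (applied now to $\psi_{\q,\lambda}$) and Theorem \ref{theo2.0} is the infinitesimal character $\lambda+\rho_\q$ of $A_\q(\lambda)$.

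There is no genuine obstacle: the corollary is a straightforward combination of the cited theorems, and the only mildly delicate point is keeping track of the three additive shifts (the $\chi$-shift in Przebinda's formula, the $\det$-twist, and the discrepancy between $m_i$ and $m_i'$) so that they cancel cleanly in the region $i\neq r_0$ while the appended block matches the $i=r_0$ contribution. I would present the calculation in a single display following the three substitutions above.
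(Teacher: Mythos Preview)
Your approach is correct and essentially identical to the paper's: apply Lemma \ref{lemm2.1} to $A_{\q'}(\lambda')$, then Przebinda's Theorem \ref{theo3.5}, then the $\det$-twist, substitute the definition of the $\lambda_i'$, and use the relations (\ref{eq3.4}) to land on the multiset $\bigl(\lambda_i+\tfrac{m_i}{2}+\tfrac{n_i+1-2k}{2}\bigr)=\lambda+\rho_\q$. One small correction to your phrasing of Theorem \ref{theo3.5}: the appended block already carries the shift $\tfrac{\alpha(\chi_2)}{2}$ (the theorem gives $\tfrac{\alpha(\chi_2)}{2}+\tfrac{n_{r_0}+1-2k}{2}$, not just $\tfrac{n_{r_0}+1-2k}{2}$), and it is this $\tfrac{\alpha(\chi_2)}{2}$ that cancels against the $-\tfrac{\alpha(\chi_2)}{2}$ in the $\det$-twist to produce the $i=r_0$ term $\lambda_{r_0}+\tfrac{m_{r_0}}{2}+\tfrac{n_{r_0}+1-2k}{2}$.
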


\section{Geometric applications}\label{sect4}

\subsection{Matsushima formula}
Let $X_{a,b}$ be the symmetric space associated to $U(a,b)$, this is the
quotient
$$G/K=U(a,b)/(U(a)\times U(b)).$$
Let $\Gamma$ be a discrete cocompact subgroup of $G=U(a,b)$ acting freely on
$X_{a,b}$. Let $C(\pi)$ be the constant by which the Casimir element acts on an ireductible unitary
representation $\pi$ of $G$. According to Hodge theory and Kuga's lemma, we have
that:
\begin{eqnarray}\label{eq4.1}
H^*(\Gamma\bk X_{a,b})=\oplus_{\left\{\substack{\pi\\C(\pi)=0}\right.} m(\pi,\Gamma)\hom_K(\bigwedge^*\mathfrak{p},\pi)
\end{eqnarray}
where $m(\pi,\Gamma)$ is the multiplicity of $\pi$ in $L^2(\Gamma\bk
G)$.

\begin{defi}\label{defi4.1}
 An irreducible unitarisable $(\g,K)$-module $\pi$ such that:
\begin{itemize}
\item $C(\pi)=0$ and
\item $\hom_K(\bigwedge^*\mathfrak{p},\pi)\neq 0$
\end{itemize}
is called cohomological. 
\end{defi}
Clearly the representation $A_\q(0)$ is cohomological for every $\q$ (because it has
the infinitesimal character of the trivial representation and it
contains $V(\q)$). Indeed by a theorem of Vogan and Zuckerman :

\begin{theo}[\cite{VZ}]\label{theo4.1}
Every cohomological representation is of the form $A_\q(0)$ and:
\begin{equation}\label{prim}
\hom_K(\bigwedge^{i-R(\q)}\mathfrak{p},A_\mathfrak{q}(0))=\hom_{K\cap L}(\bigwedge^i\p\cap\mathfrak{l},V(\mathfrak{q})).
\end{equation}
\end{theo}

\begin{rema}\label{rema4.1}
By the unicity in theorem \ref{theo2.0} and theorem \ref{prop2.1}, we have $A_\q(0)\simeq A_{\q'}(0)$ if and
only if the theta stable algebra have the same associated pair of
partitions.
\end{rema}

By this theorem part of the structure of the cohomology of $H^*(\Gamma\bk X_{a,b})$ is
contained in the numbers $m(A_\q,\Gamma)$. Remind that the following
theorem was stated in the:

\begin{theo}[\cite{Li1}]\label{theo4.2}
Let $(\q,L)$ be a theta stable algebra of $U(a,b)$ such that
$$L=U(x,y)\times \text{a compact group},$$
and suppose that $2(x+y)>a+b$ then there exists a discrete cocompact
subgroup of $U(a,b)$ such that:
$$m(A_\q,\Gamma)\neq 0.$$
\end{theo}
We want to generalize this theorem to other parabolic theta stable algebras
$\q$. As we explained in the introduction, the main tool of the proof is the use of global
theta correspondance and the first point of corollary \ref{coro3.1}. We
recall some facts on global theta correspondance.

\subsection{Global theta correspondance}
Let $F$ be a totally real numberfield and $E$ a totally complex quadratic
extension of $F$. Let $\epsilon_{E/F}$ be the
 character of the idele of $F$ associated to the quadratic extension
 $E/F$. Let $V_1$ (resp. $V_2$) be a
$E$-vector space with a $\eta$ (resp. $-\eta$) anisotropic hermitian form $(,)_1$
(resp. $(,)_2$). Let $n_i=\dim V_i$. We also let $G_i=U(V_i)$. Let
$\chi=(\chi_1,\chi_2)$ be a pair of characters of the idele of $E$ such that:
$${\chi_1}_{|\A_F^\times}=\epsilon_{E/F}^{n_{2}}\text{ and }{\chi_2}_{|\A_F^\times}=\epsilon_{E/F}^{n_{1}}$$
for $i=1,2$.

\subsection{Definition}
The metaplectic representation of $G_1(F\otimes\R)\times G_2(F\otimes\R)$ can be globalized 
to a representation $S_{\chi}$ of $G_1(\A)\times G_2(\A)$. Let
$\pi_1=\otimes_v\pi_{1,v}$ be an automorphic representation of $G_1(F)\bk G_1(\A)$. The theta lift of $\pi_1$
is the application:
$$\Theta:\pi_1^\vee\otimes S_\chi\rightarrow \mathcal{A}(G_2(F)\bk G_2(\A))$$
which associates to $f\otimes \vhi$ the automorphic function:
\begin{equation}\label{eq4.2}
\theta_{\psi,\chi}(f,\vhi)(h)=\int_{G_1(F)\bk G_1(\A)}f(g)\Theta_{\chi}(g,h,\vhi)dg
\end{equation}
where $\Theta_{\chi}(g,h,\vhi)$ is the theta kernel. Since we have assumed
that both $V_1$ and $V_2$ are anisotropic, there is no issue of convergence
(the quotients $G_i(F)\bk G_i(\A)$ are compact). Let
$\Theta_\chi(\pi_1,V_2)$ be the image of the application theta. 
According to S.S. Kudla and S. Rallis:
\begin{theo}[\cite{KR0} prop 7.1.2]\label{theo4.3} If $\Theta_\chi(\pi_1,V_2)\neq 0$ then it is a finite
  sum of irreducible automorphic representations. Furthermore if
  $\pi_2=\otimes_v\pi_{2,v}$ is an irreducible componant of
  $\theta_\chi(\pi_1,V_2)$ then for all archimedean
  places we have that:
$$\pi_{2,v}=\theta_\chi(\pi_{1,v}).$$
\end{theo}
Using the theory of $L$-functions and the Rallis inner product formula, one
can prove the following result:
\begin{theo}\label{theo4.4} Let $\pi_1=\otimes_v\pi_{1,v}$ be an automorphic
  representation on $G_1$ and suppose
\begin{itemize}
\item  on the one hand that $\dim V_2\geq 2\dim V_1+1$ 
\item and on the other hand that for every archimedean places $v$
$$\theta_\chi(\pi_{1,v},V_{2,v})\neq 0$$
then
$$\Theta_\chi(\pi_{1},V_{2})\neq 0.$$
\end{itemize}
\end{theo}

\begin{defi}\label{defi4.2}
Let $a,b\in \Z_{\geq 0}$. A parabolic theta stable algebra of $U(a,b)$ is
said to be convergent if there exists
\begin{itemize}
\item two sequences in $\Z_{\geq 0}$, $a_0,\dots,a_n$ and $b_0,\dots,b_n$ such that
\begin{eqnarray}
a_n=a \text{ and } b_n=b\\
 a_{i+1}+b_{i+1}&>& 2(a_i+b_i)\text{ if }0\leq i\leq n-1,\\ 
a_{i-1}+b_{i-1}&\leq& \min( a_i,b_i)\text{ if }2\leq i\leq n-1.
\end{eqnarray}
\item  for all $1\leq i\leq n$ a standard parabolic theta
  stable algebra $\q_i$ of $U(a_i,b_i)$ such that
\begin{itemize}
\item the Levi $L_0$ of $\q_0$ is compact,
\item $\q_n=\q$ and
\item for all $0\leq i\leq n-1$, $\theta(\q_{i-1})=\q_i$ (see definition \ref{defi3.2.1}).
\end{itemize}
\end{itemize}
\end{defi}

\begin{theo}\label{theo4.5}
Let $\q$ be a convergent theta stable algebra of $U(a,b)$ then 
 there exists a discrete cocompact
subgroup of $U(a,b)$ such that:
$$m(A_\q,\Gamma)\neq 0.$$
\end{theo}
\begin{proof}
We can use the notation of definition \ref{defi4.2} since $\q$ is assumed to be
convergent. There exists a  parameter $\lambda_i\in\C^{a_i}\times\C^{b_i}$
verifying (\ref{eq2.5}) and such that
$$\theta(A_{\q_{i-1}}(\lambda_{i-1}))=A_{\q_i}.$$
Since $L_0$ is compact, $A_{\q_0}$ is a discrete series. Using the main theorem of De George and
Wallach article \cite{DW1}, we can choose $\Gamma_0$ such that
$m(A_{\q_0},\Gamma_0)\neq 0$. Then using recursively global theta correspondance
(i.e theorem \ref{theo4.4}), we find a sequence of discrete subgroups
$(\Gamma_i)_{1\leq i\leq n}$ such that:
$$m(A_{\q_i}(\lambda_i),\Gamma_i)\neq 0.$$
The case $i=n$ proves the theorem.
\end{proof}

One can of course obtain more precise statements (about the possible
choice of $\Gamma$ for example). Furthermore using the
method of \cite{C1}, one can even obtain some results on the growth of the
multiplicity of $m(A_\q(0),\Gamma)$ in the tower of arithmetic congruence groups,
see \cite{C2} for more details.


\begin{thebibliography}{999}
\bibitem{Adams}Adams, Jeffrey; $L$-functoriality for dual pairs. Orbites unipotentes et repr\'esentations, II.  Ast\'erisque  No. 171-172  (1989), 85--129. 

\bibitem{AJ} Adams, Jeffrey; Johnson, Joseph F. Endoscopic groups and packets of nontempered representations.  Compositio Math.  64  (1987),  no. 3, 271--309.

\bibitem{Arthur} Arthur, James; Unipotent automorphic representations: conjectures. Orbites unipotentes et repr\'esentations, II.  Ast\'erisque  No. 171-172  (1989), 13--71.

\bibitem{B1}Bergeron, Nicolas; Propri\'et\'es de Lefschetz automorphes
pour les groupes unitaires et orthogonaux. \emph{M\'em. Soc. Math. Fr. (N.S)}
No. 106 (2006).

\bibitem{BC}Bergeron, Nicolas; Clozel, Laurent; Spectre automorphe des vari\'et\'es hyperboliques et applications topologiques. (French) [Automorphic spectrum of hyperbolic manifolds and topological applications]  Ast\'erisque  No. 303  (2005), xx+218 pp. 

\bibitem{C1} Cossutta, Mathieu; Asymptotique des nombres de Betti des
  vari\'et\'es arithm\'etiques. To appear in DMJ.
\bibitem{C2} Cossutta, Mathieu; Cohomologie de certaines vari\'et\'es localement sym\'etriques et
  correspondance theta. PhD thesis, University of Paris 7.


\bibitem{DW1} De George, David L.; Wallach, Nolan R. Limit formulas for multiplicities in $L\sp{2}(\Gamma \backslash G)$.  Ann. of Math. (2)  107  (1978), no. 1, 133--150.



\bibitem{HKS} Harris, Michael; Kudla, Stephen S.; Sweet, William J. Theta dichotomy for unitary groups.  J. Amer. Math. Soc.  9  (1996),  no. 4, 941--1004.



\bibitem{Howe} Howe, Roger; $\theta$-series and invariant
  theory. \emph{Automorphic forms, representations and $L$-functions,
    Part 1} pp. 275--285, Proc. sympos. Pure Math.,
  XXXIII,\emph{Amer.Math.Soc.,Providence}, 1979.


\bibitem{KV} Knapp, Anthony W.; Vogan, David A., Jr.; Cohomological induction and unitary representations.
Princeton Mathematical Series, 45. Princeton University Press, Princeton, NJ, 1995. xx+948.


\bibitem{K}  Kudla, Stephen S.; Splitting metaplectic covers of dual
  reductive pairs. Israel J. Math. 87 (1994), no. 1-3, 361--401.

\bibitem{KR0}  Kudla, Stephen S.;  Rallis Stephen; Regularized
  Siegel-Weil formula: the first term identity. \emph{Ann. of
    Math. (2)} 140 (1994), no.1, 1--80.


\bibitem{Li1} Li, Jian-Shu; Nonvanishing theorems for the cohomology of certain arithmetic quotients.  J. Reine Angew. Math.  428  (1992), 177--217. 

\bibitem{Li2} Li, Jian-Shu; Singular unitary representations of
  classical groups. \emph{Invent. Math.} 97 (1989), no.2, 237-255.


\bibitem{Li3}  Li, Jian-Shu; Theta lifting for unitary
  representations with nonzero cohomology. \emph{Duke Math. J. }61
  (1990), no.3, 913--937.

\bibitem{Pr} Przebinda, Tomasz; Characters, dual pairs, and unitary representations.  Duke Math. J.  69  (1993),  no. 3, 547--592. 


\bibitem{VZ} Vogan, David A.; Zuckerman, Gregg J.; Unitary
  representations with nonzero cohomology. Compositio Math. 53 (1984), no. 1, 51--90. 


\end{thebibliography}
\end{document}